\newtheorem{theorem}{Theorem}[section]
\newtheorem{proposition}[theorem]{Proposition}
\newtheorem{corollary}[theorem]{Corollary}
\newtheorem{lemma}[theorem]{Lemma}
\theoremstyle{definition}
\newtheorem{example}[theorem]{Example}
\newtheorem{definition}[theorem]{Definition}
\newtheorem{remark}[theorem]{Remark}
\numberwithin{equation}{section}
\renewcommand\({\Big(}
\renewcommand\){\Big)}
\newcommand\<{\langle}
\renewcommand\>{\rangle}
\newcommand\er{\eqref}
\newcommand\bi{\begin{itemize}}
\newcommand\ei{\end{itemize}}
\newcommand\bm{\begin{pmatrix}}
\renewcommand\em{\end{pmatrix}}
\newcommand\be{\begin{equation}\label}
\newcommand\ee{\end{equation}}
\newcommand\I{\int\limits}
\renewcommand\S{\sum\limits}
\renewcommand\P{\prod\limits}
\newcommand\dl{\partial}
\newcommand\oc{\circ}
\newcommand\op{\oplus}
\newcommand\al{\approx}
\newcommand\el{\ell}
\newcommand\la{\alpha}
\newcommand\lb{\beta}
\newcommand\ld{\delta}
\newcommand\lD{\Delta}
\newcommand\Le{\varepsilon}
\newcommand\lh{\eta}
\newcommand\li{\iota}
\renewcommand\ll{\lambda}
\renewcommand\ln{\nu}
\newcommand\lt{\vartheta}
\newcommand\Lt{\tau}
\newcommand\lo{\omega}
\newcommand\lO{\Omega}
\newcommand\lx{\xi}
\newcommand\lz{\zeta}
\newcommand\iu{\cup}
\newcommand\ui{\cap}
\newcommand\ic{\subset}
\newcommand\xx{\times}
\newcommand\oo{\infty}
\newcommand\OO{\emptyset}
\newcommand\sm{\setminus}
\newcommand\EL{\mathcal E}
\newcommand\GL{\mathcal G}
\newcommand\HL{\mathcal H}
\newcommand\PL{\mathcal P}
\newcommand\VL{\mathcal V}
\newcommand\Bl{\mathbf B}
\newcommand\Cl{\mathbf C}
\newcommand\Ol{\mathbf O}
\newcommand\Tl{\mathbf T}
\renewcommand\k{{\boldsymbol k}}
\newcommand\m{{\boldsymbol m}}
\newcommand\bb[4]{\begin{pmatrix}{#1}&{#2}\\{#3}&{#4}\end{pmatrix}}
\newcommand\ba[2]{\begin{pmatrix}{#1}\\{#2}\end{pmatrix}}
\newcommand\ca[3]{\begin{pmatrix}{#1}\\{#2}\\{#3}\end{pmatrix}}
\newcommand\da[4]{\left(\begin{array}{c}{#1}\\{#2}\\{#3}\\{#4}\end{array}\right)}
\newcommand\cc[9]{\begin{pmatrix}{#1}&{#2}&{#3}\\{#4}&{#5}&{#6}\\{#7}&{#8}&{#9}\end{pmatrix}}
\newcommand\h[1]{{\hat{#1}}}
\renewcommand\t[1]{{\tilde{#1}}}
\renewcommand\o[1]{{\overline{#1}}}
\renewcommand\u[1]{{\underline{#1}}}
\renewcommand\v[1]{{\vec{#1}}}
\newcommand\F[1]{\sqrt{#1}}
\newcommand\f[2]{\frac{#1}{#2}}
\begin{document}

\setcounter{section}{-1}

\title{Holomorphic isometries from the unit ball into symmetric domains}
\author{Harald Upmeier, Kai Wang and Genkai Zhang}

\address{ Fachbereich Mathematik, Universit\"at Marburg, Marburg, 35032, Germany}
\email{upmeier@mathematik.uni-marburg.de}

\address{School of Mathematical Sciences, Fudan University, Shanghai, 200433, P. R. China}
\email{kwang@fudan.edu.cn}

\address{Department of Mathematical Sciences, Chalmers and Gothenburg University, Gothenburg 41296, Sweden}
\email{genkai@chalmers.se}

 \subjclass[2010]{32M, 32H, 17C}
\keywords{Bounded symmetric domains, holomorphic isometries, Jordan triples,
second fundamental forms, rational mappings}
\thanks{ K. Wang  was partially supported by NSFC (11271075,11420101001), the Humboldt Foundation, Laboratory of Mathematics for Nonlinear Science at Fudan University, and the Magnusson Foundation of Royal Swedish Academy of Sciences.
Research by G. Zhang partially supported by the Swedish Science Council (VR)}

\maketitle

\begin{abstract} We construct rational isometric holomorphic embeddings of the unit ball into higher rank symmetric domains $D$, first discovered by Mok, in an explicit way using Jordan triple systems, and we classify all isometric embeddings into tube domains of rank 2. For symmetric domains of arbitrary rank, including the exceptional domains of dimension 16 and 27, respectively, we characterize the Mok type mapping in terms of a vanishing condition on the second fundamental form of the image of  $F$ in $D$.
 \end{abstract}

\section{Introduction} The study of holomorphic isometries, and more generally proper holomorphic maps, between general K\"ahler manifolds is a classical topic in complex geometry, starting with E. Calabi; see \cite{Calabi, Tsai-JDG, Mok-Ng, Ng-PAMS, Chan, Mok-Ng-meas-pre, Ng-MZ, Huang-Yuan, Fang-Huang-Xiao}. In a series of papers \cite{Mok-JEMS, Mok-ALM, Mok-unitdisc, Mok-2nd-form} N. Mok initiated the study of holomorphic isometric mappings from the unit ball $\Bl=\Bl_d$ in $\Cl^d$ into bounded symmetric domains. Besides the totally geodesic embeddings, Mok \cite{Mok-non-geo-iso} constructed a new class of non-totally geodesic rational holomorphic isometric embeddings of $\Bl$ into a general bounded symmetric domain $D$ of rank $r\ge 2$. In the present paper we shall give an elementary and explicit construction of these Mok type mappings via the Jordan algebraic description of bounded symmetric domains \cite{L1}. Moreover, as a main result of this paper, we obtain a differential-geometric characterization of Mok type embeddings into bounded symmetric domains of arbitrary rank, including the exceptional domains. It turns out that the Jordan theoretic description is essential both for the construction of holomorphic isometries and for proving the characterization.
 
For the Lie ball $D$ of dimension $d+1$, $d>1,$ we also construct a class of irrational holomorphic isometric embeddings and prove that all holomorphic isometries from $\Bl$ into $D$ are either Mok type mappings or these irrational mappings up to reparametrization by automorphisms of $\Bl$ and $D.$ After a preliminary version of this paper was finished we noticed the preprints \cite{XY,Chan-Mok-2016}, where such results are also obtained. However their methods are different from ours. In particular the characterization of Mok type mappings
for non-tube domains
or higher rank domains is not treated there.

The paper is organized as follows. In Section 1 we recall some background on Jordan triple systems and their relation to bounded symmetric domains. In Section 2 we realize the Mok type rational mappings in terms of the Jordan theoretic 'quasi-inverse'. The differential-geometric characterization of Mok embeddings mentioned above is proved in Section 3. Section 4 contains the classification of Bergman isometries from the unit ball $\Bl_d$ into the Lie ball in $\Cl^{d+1}.$ In the final Section 5 we present an alternative approach to two results in \cite{Mok-JEMS}, concerning the extension of local holomorphic isometries and an explicit description of the graph of a holomorphic isometry.

We would like to thank Ngaiming Mok for explaining his work and Xiaojun Huang for sending us the papers \cite{Fang-Huang-Xiao, Huang-Yuan}. We also thank the referee for valuable comments.

\section{Bounded symmetric domains and isometric embeddings}
We briefly recall some known facts on bounded symmetric domains and their Jordan theoretic description. See \cite{FK,L1,U} and references therein for general background. Let $D$ be an irreducible bounded symmetric domain in $Z=\Cl^m$ of rank $r$. Let $G$ be the  identity component of the group of all biholomorphic mappings of $D$. Then $D=G/K$ is a realization of the Hermitian symmetric space $G/K$, where $K=\{g\in G; g\cdot 0=0\}$ is the isotropy subgroup of $0\in D$.  The group $G$ has a complexification $G^\Cl$ acting as rational mappings on $Z$. We denote  by $K^\Cl$ the corresponding complexification of $K$. It is well-known \cite{L1,U} that $Z$ can be equipped with the structure of a {\bf hermitian Jordan triple} and $D$ can then be realized as the 'spectral' unit ball in $Z.$ We denote the Jordan triple product by $(u,v,w)\mapsto\{u;v;w\}$,
which is $\mathbb C$ linear in $u,w$ and 
$\bar{\mathbb C}$-linear
in $v$,  and put 
$$Q_uv:=\f12\{u;v;u\},$$
$$D(u,v)w:=\{u;v;w\},$$
\be{1.14}B(u,v)w=w-\{u;v;w\}+Q_uQ_vw.\ee 
An element $c\in Z$ is called a {\bf tripotent} if $Q_cc=c.$ The endomorphism $B(u,v)\in\ End(Z)$ is called the {\bf Bergman operator}. We also write $B(u,v)=B_{u,v}.$ The covariance property
\be{1.8}B(h^{-1}u,h^*v)=h^{-1}\,B(u,v)\,h\ee
is known to hold for all $h\in K$, and it holds further for all
$h\in K^\Cl,$ since this relation is holomorphic in $h$ and holds for $h\in K,$ where $h^*=h^{-1}.$ It is well-known \cite{L1} that the {\bf Bergman metric} $ds^2_D$  at $z\in D$ is given by
$$ds^2_z(u, v)=\<u|v\>_z=\<B_{z,z}^{-1}u|v\>$$
where $\<u|v\>$ is the $K$-invariant inner product on $Z,$ normalized by $\<c|c\>=1$ for each minimal tripotent $c\in Z.$ The {\bf Bergman kernel} for the Bergman space $L_a^2(D)$, with respect to normalized Lebesgue measure on $D,$ has the form
\be{1.9}K^D(z,w)=\det B(z,w)^{-1}=\lD(z,w)^{-p}.\ee
Here $\lD:Z\xx\o Z\to\Cl$ is a sesqui-holomorphic polynomial called the {\bf quasi-determinant} (it is a denominator of the 'quasi-inverse' defined below.) The numerical invariant $p=2+a(r-1)+b$ is called the {\bf genus} of $D,$ and $a,b$ are the characteristic multiplicities of $Z.$ Then $\dim Z=r+\f a2(r-1)r+rb.$

\begin{example} For $r\le s,$ the matrix space $Z=\Cl^{r\xx s}$ is a hermitian Jordan triple with triple product
$$\{u;v;w\}=uv^*w+wv^*u.$$ 
The associated bounded symmetric domain is the {\bf matrix unit ball} 
$$D=\{z\in \Cl^{r\xx s}:\ 1-zz^*>0\}.$$
We have $a=2$ and $b=r-s.$ Therefore $p=r+s.$ The Bergman operator has the form
$$B(z,w)\lz=(1-zw^*)\lz(1-w^*z)$$
and the quasi-determinant is $\lD(z,w)=\det(1-zw^*).$
\end{example}

\begin{example} In the special case $r=1$ we obtain the {\bf unit ball} 
$$\Bl=\Bl_d=\{z\in\Cl^d:\ \<z|z\><1\}$$
of genus $p=d+1,$ where $\<z|w\>=\S_{i=1}^d z_i\o w_i$ is the inner product. The Bergman operator is given by
$$B(z,w)\lz=(1-\<z|w\>)(\lz-\<\lz|w\>z),$$
and $\lD(z,w)=1-\<z|w\>.$
\end{example}

\begin{example}\label{1.b} Let $Z=\Cl^{d+1},\ d\ge 2$ be equipped with the conjugation $z\mapsto\o z.$ Put 
$(z|w):=\S_j z_j\o w_j$ and $\<z|w\>:=2(z|w).$ The Jordan triple product
\be{1.10}\{u;v;w\}=\<u|v\>w+\<w|v\>u-\<u|\o w\>\o v\ee
makes $Z$ into a hermitian Jordan triple of rank 2 called a {\bf spin factor}. By \cite[4.16]{L1} the corresponding symmetric domain is the {\bf Lie ball}
\be{1.11}D=\{z\in Z; (z|z)<1,\ 1-2(z|z)+|(z|\o z)|^2>0\}.\ee
In the reducible case $d=1$ the Lie ball is the bidisk.
\end{example}

\begin{lemma}\label{1.a} The spin factor has the quasi-determinant
\be{1.12}\lD(z,w)=1-\<z|w\>+\f14\<z|\o z\>\<\o w|w\>=1-\<z|w\>+N(z)\o{N(w)},\ee
where 
\be{1.13}N(z)=\f12\<z|\o z\>\ee
is the Jordan algebra determinant for some maximal tripotent $e\in Z,$ normalized by $N(e)=1.$ 
\end{lemma}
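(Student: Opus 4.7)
The plan is to verify the formula by directly computing $\det B(z,w)$ and matching it against $\lD(z,w)^{p}$, where $p = d+1$ is the genus of the spin factor (obtained from $r=2$, $a=d-1$, $b=0$, giving $\dim Z = 2+a+2b = d+1$ and $p = 2+a(r-1)+b = d+1$); then, by \eqref{1.9}, any sesqui-holomorphic polynomial whose $p$-th power is $\det B(z,w)$ must be the quasi-determinant.

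Unwinding \eqref{1.10}, one reads off the quadratic representation
\begin{equation*} Q_z v = \<z|v\>\,z - N(z)\,\o v, \end{equation*}
and $D(z,w)\lz = \<z|w\>\lz + \<\lz|w\>\,z - \<z|\o\lz\>\,\o w$. Using the antilinearity of $Q_z$ in its argument, one composes to obtain a closed form for $Q_z Q_w\lz$; assembling $B(z,w)\lz = \lz - D(z,w)\lz + Q_z Q_w\lz$ and collecting by vector part then yields a decomposition
\begin{equation*} B(z,w)\lz = \lD(z,w)\,\lz + \la(\lz)\,z + \lb(\lz)\,\o w, \end{equation*}
in which the scalar coefficient of $\lz$ collapses to exactly $\lD(z,w) = 1 - \<z|w\> + N(z)\,\o{N(w)}$, and $\la,\lb$ are explicit linear functionals.

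Since $B(z,w) - \lD(z,w)\,I$ has rank at most $2$ with image in $\mathrm{span}(z,\o w)$, Sylvester's determinant identity reduces
\begin{equation*} \det B(z,w) = \lD(z,w)^{d-1}\det\bigl(\lD(z,w)\,I_2 + M\bigr), \end{equation*}
where $M$ is the $2\times 2$ matrix with entries $\la(z), \la(\o w), \lb(z), \lb(\o w)$. Setting $s=\<z|w\>$, $p_0=N(z)$, $q_0=\o{N(w)}$ (so $\lD = 1 - s + p_0 q_0$), the remaining $2\times 2$ determinant is expected to factor as $(s - 1 - p_0 q_0)^2 = \lD^2$. This yields $\det B(z,w) = \lD^{d+1}$, identifying $\lD$ as the quasi-determinant.

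The secondary assertion, that $N(z)=\f12\<z|\o z\>$ is the Jordan algebra determinant associated to a maximal tripotent, is handled by choosing $e=(1,0,\dots,0)$: one checks $\<e|e\> = 2$ and $N(e)=1$, so the formula for $Q$ above gives $Q_e e = 2e - e = e$, showing $e$ is a tripotent; that $N(e)=1$ forces it to be maximal, and in the induced Jordan algebra structure $N$ becomes the generic norm $\sum_i z_i^2$. The main obstacle is the bookkeeping in the computation of $Q_z Q_w\lz$: the antilinearity of $Q_z$ and the cross-terms involving $\o z,\o w$ have to be tracked so that the coefficient of $\lz$ collapses exactly to $\lD(z,w)$ and not to some nearby expression; once the rank-$2$ decomposition of $B(z,w)$ is in hand, the Sylvester reduction and the $2\times 2$ determinant calculation are short.
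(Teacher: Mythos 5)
Your proof is correct, but it takes a genuinely different and more computational route than the paper's. The paper never computes a determinant: it starts from the ansatz $\lD(z,w)=1-\la\<z|w\>+\lb\<z|\o z\>\<\o w|w\>$ (which is forced by $K$-invariance together with the rank-two Faraut--Koranyi expansion \er{1.7}), and then pins down $\la=1$ and $\lb=\f14$ by evaluating at a minimal and at a maximal tripotent, both of which lie on $\dl D$ where $\lD(\cdot,\cdot)$ must vanish. You instead verify $\det B(z,w)=\lD(z,w)^{p}$ directly, and your computation does close: one gets $\la(\lz)=\<\lz|w\>(\<z|w\>-1)-\o{N(w)}\<z|\o\lz\>$ and $\lb(\lz)=\<z|\o\lz\>-N(z)\<\lz|w\>$, and the $2\xx 2$ determinant you leave as ``expected'' indeed factors: with $s=\<z|w\>$, $p_0=N(z)$, $q_0=\o{N(w)}$ it equals $\((1-s)^2-p_0q_0\)(1-p_0q_0)+p_0q_0(s-2)^2=(1-s+p_0q_0)^2$, so $\det B(z,w)=\lD(z,w)^{d-1}\cdot\lD(z,w)^2=\lD(z,w)^{d+1}$. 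Two small points you should make explicit to be airtight: knowing $\det B=\lD^{p}$ determines $\lD$ only up to a $p$-th root of unity, so the normalization $\lD(0,0)=1$ is needed to conclude equality rather than proportionality; and the Sylvester reduction should be read as a polynomial identity established on the dense open set where $z$ and $\o w$ are linearly independent. What your approach buys is independence from the Faraut--Koranyi machinery and an explicit formula for $B(z,w)$ as a rank-two perturbation of $\lD(z,w)\,\li$; what the paper's approach buys is brevity and the conceptual point that the two coefficients are already determined by the vanishing of $\lD$ at boundary tripotents.
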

\begin{proof} Write $\lD(z,w)=1-\la\<z|w\>+\lb\<z|\o z\>\<\o w|w\>$ for coefficients $\la,\lb$ to be determined. Any minimal tripotent 
$c$ has supremum norm 1 and hence belongs to $\dl D.$ Therefore
$$0=\lD(c,c)=1-\la\<c|c\>+\lb\<c|\o c\>\<\o c|c\>=1-\la$$
since $\<c|c\>=1$ and $\<c|\o c\>=0.$ It follows that $\la=1.$ On the other hand, any maximal tripotent $e$ also belongs to $\dl D,$ being the sum of two orthogonal minimal tripotents. Moreover, $\o e=\lt e$ for some constant $\lt\in\Tl:=\{u\in\Cl; |u|=1\}$; this fact is a consequence that $e=(1,0,\cdots,0)\in\Cl^{d+1}$ is a maximal tripotent and that the group $K=O(d+1)\xx\Tl$ acts transitively on the maximal tripotents. Therefore
$$0=\lD(e,e)=1-\<e|e\>+\lb\<e|\lt e\>\<\lt e|e\>=-1+4\lb$$
since $\<e|e\>=2.$ This shows that $\lb=\f14.$
\end{proof}

From now on let $\Bl=\Bl_d$ denote the unit ball in $\Cl^d.$
\begin{definition} A holomorphic map $F:\Bl\to D$ which is an isometry with respect to the Bergman metric on $\Bl$ and $D,$ resp., 
$F^\ast ds^2_D=ds^2_\Bl$, is called an {\bf isometric embedding}. Since the Bergman metric for $D$ is expressed via the Bergman operator, this is equivalent to the condition
\be{1.1}\<B_{F(z),F(w)}^{-1}F'(z)x|F'(w)y\>=\<B_{z,w}^{-1}x|y\>\ee
for all $x,y\in\Cl^d.$ 
\end{definition}

By composing with automorphisms of $\Bl$ or $D,$ which preserve the Bergman metric, we may assume that $F(0)=0.$ Since the Bergman
metrics at the origin agree, $\Bl$ and $D$ must have the same genus $p=d+1.$ The Bergman metric $B^{-1}$ on the domain $\Bl$ or $D$ is the Hessian  
$$B=\o\dl\dl\log K(z,z).$$
for the Bergman kernel $K(z,z).$ It is proved in \cite{Mok-JEMS} that the above equation \er{1.1} for the Hessians is equivalent to the equation
\be{1.2}K^D(F(z),F(z))=K^\Bl(z,z),\quad z\in\Bl,\ee
for the kernel functions. This in turn is equivalent to
\be{1.3}\lD(F(z),F(z))=1-\<z|z\>,\quad z\in\Bl.\ee
In other words, under the condition that $F(0)=0$ the isometric condition is equivalent to \er{1.3}. A stronger statement is proved in \cite{Mok-JEMS}: If $F$ is a holomorphic mapping from a neighborhood of $0\in\Bl$ to $D$ such that \er{1.3} holds, then $F$ extends to an isometry from $\Bl$ to $D$. In Section 5 we give an elementary proof of this fact.

Let $\PL(Z)$ denote the algebra of all (holomorphic) polynomials on $Z,$ endowed with the Fischer-Fock inner product. Under the natural $K$-action $\PL(Z)$ has a multiplicity-free decomposition into irreducible $K$-modules $\PL_\m(Z)\in\PL(Z)$ indexed by partitions $\m$ of length $r$ \cite{FK}. It follows that 
\be{1.4}e^{\<z|w\>}=\S_{\m}E^\m(z,w)\ee 
where 
\be{1.5}E^\m(z,w)=E^\m_w(z)
\ee
is the reproducing kernel of $\PL_\m(Z).$ The {\bf Faraut-Koranyi formula} \cite{FK}
\be{1.6}\lD(z,w)^{-\ln}=\S_{\m}(\ln)_\m E^\m(z,w)\ee
expresses powers of the quasi-determinant $\lD$ of $Z$ in terms of these kernel functions. Here $(\ln)_\m$ is the multi-variable Pochhammer symbol. Specializing \er{1.6} to $\ln=-1,$ the non-zero terms correspond exactly to partitions $\k=(1,\ldots,1,0,\ldots,0),$ with $k$ ones, for $0\le k\le r.$ It follows that
\be{1.7}\lD(z,w)=\S_{k=0}^r(-1)^k C_k E^\k(z,w),
\qquad C_k:=\P_{j=1}^k(1+\f a2(j-1)).\ee

\section{Rational isometric embeddings: Existence}
In this section we construct rational isometric embeddings in an explicit way, using the Jordan triple approach. A pair $(z,w)\in Z\xx Z$ is called {\bf quasi-invertible} if $B_{z,w}$ is invertible. In this case we have $B_{z,w}\in K^\Cl$ and
$$z^w:=B_{z,w}^{-1}(z-Q_zw)$$
is called the {\bf quasi-inverse}. The quasi-inverse map is sesqui-holomorphic in $(z,w).$ For fixed $w\in Z$ the map $z\mapsto z^w$ is a biholomorphic automorphism of the compact dual $\h Z$ of $D,$ with inverse given by $z\mapsto z^{-w}.$
Any tripotent $c\in Z$ induces a {\bf Peirce decomposition}
$$Z=Z^2\op Z^1\op Z^0,$$
where $Z^\la:=\{z\in Z:\ \{c;c;z\}=\la z\}.$ For a tripotent $c$ of rank $k,$ the Peirce 2-space $Z^2$ is itself an irreducible hermitian Jordan triple of rank $k.$ Let $D^2\ic Z^2$ denote the corresponding bounded symmetric domain.

\begin{proposition} Let $z=u+v\in Z^2\op Z^1,$ with $u\in D^2.$ Then the pair $(z,c)$ is quasi-invertible, and
$$B(z^c,z^c)=B_{z,c}^{-1}\,B_{c,v}\,B_{a,c}\,B_{v,c}B_{c,z}^{-1},$$
where $a:=u+Q_cu+\{v;v;c\}\in Z^2$ is the "real part" of $z$ in the sense of Siegel domains.
\end{proposition}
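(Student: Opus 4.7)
My approach is to leverage the Peirce decomposition $Z=Z^2\op Z^1\op Z^0$ induced by the tripotent $c$, together with the Peirce multiplication rule $\{Z^i;Z^j;Z^k\}\ic Z^{i-j+k}$. First I would establish quasi-invertibility of $(z,c)$ by writing $B(z,c)=I-D(z,c)+Q_zQ_c$ as a block matrix in the Peirce grading. Two observations are essential: $Q_c$ annihilates $Z^1\op Z^0$ (since $\{c;Z^j;c\}\ic Z^{4-j}=0$ for $j=0,1$), and $D(v,c)$ with $v\in Z^1$ shifts the Peirce grading strictly down by one, while $D(u,c)$ with $u\in Z^2$ preserves it. Consequently $B(z,c)$ is block upper-triangular in the ordering $Z^0,Z^1,Z^2$, with diagonal blocks $I_{Z^0}$, $I-D(u,c)|_{Z^1}$, and $B(u,c)|_{Z^2}$. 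The last block is invertible because $(u,c)$ is quasi-invertible inside the Jordan subtriple $Z^2$ (as $u\in D^2$), and the middle block is invertible by a spectral-bound argument on $D(u,c)|_{Z^1}$ that again uses $u\in D^2$.

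For the identity, I would proceed in two stages. In the case $v=0$, the claim reduces to the Jordan-algebraic identity $B(u^c,u^c)=B_{u,c}^{-1}B_{u+Q_cu,c}B_{c,u}^{-1}$ inside $Z^2$ viewed as a Jordan algebra with unit $c$. This can be verified by direct computation: e.g.\ in the matrix model $Z^2=\Cl^{r\xx r}$ with $c=I_r$, one has $u^c=(I-u)^{-1}u$ and $a=u+u^*$, and both sides collapse to left multiplication by $(I-u)^{-1}(I-u-u^*)(I-u^*)^{-1}$ together with right multiplication by $(I-u^*)^{-1}(I-u-u^*)(I-u)^{-1}$. For general $v\in Z^1$, I would express $z^c=B_{z,c}^{-1}(z-Q_zc)$ in Peirce components using the polarisation $Q_zc=Q_uc+\{u;c;v\}+Q_vc$, substitute into $B(z^c,z^c)$, and use the covariance relation \er{1.8} to move $K^\Cl$-type factors through the Bergman operators and regroup the result into the form $B_{z,c}^{-1}\cdot(\text{middle})\cdot B_{c,z}^{-1}$.

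The main obstacle is recognising the correct middle factor $B_{c,v}B_{a,c}B_{v,c}$, and in particular the quadratic correction $\{v;v;c\}\in Z^2$ in the definition of $a$. Geometrically, this correction promotes $u+Q_cu$ to the genuine Siegel-domain ``real part'' of $z$ relative to the tripotent $c$, while the flanking operators $B_{c,v},B_{v,c}$ implement the Heisenberg-type translation sending $(u,0)$ to $z=u+v$ in the Siegel realisation. Making this precise should amount to a direct Peirce-graded computation whose core is showing that the ``shear'' by $v$ conjugates $B_{a,c}$ to the Bergman operator entering $B(z^c,z^c)$; I expect that no tools beyond the Peirce calculus and the Bergman operator identities already in use are required.
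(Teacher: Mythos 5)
Your setup is sound and several intermediate claims are correct: the block-triangularity of $B(z,c)$ in the Peirce grading (which, together with invertibility of $B(u,c)$ on $Z^2$ for $u\in D^2$ and the observation that $B(u,c)|_{Z^1}=I-D(u,c)|_{Z^1}$, does yield quasi-invertibility of $(z,c)$), and the $v=0$ identity as checked in the matrix model. But the proposal stops exactly where the proof has to begin. The step ``substitute $z^c=B_{z,c}^{-1}(z-Q_zc)$ into $B(z^c,z^c)$, use covariance to regroup into $B_{z,c}^{-1}(\cdot)B_{c,z}^{-1}$, and recognise the middle factor'' is the entire content of the proposition, and you explicitly defer it (``should amount to a direct Peirce-graded computation''). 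What makes this work in the paper is not a brute-force Peirce computation but a short chain of quasi-inverse addition formulae from Loos: $(v+u)^c=v^c+B_{v,c}^{-1}u^{(c^v)}$ gives $c+z^c=B_{v,c}^{-1}(c+u^c)$ with $c+u^c=c^{Q_cu}$; the identity $B(z^c,z^c)\,B_{c,z}=B(c+z^c,z)$ converts the target into a single Bergman operator of a sum; and the identities JP~33/34 (e.g.\ $B_{x,y+z}=B_{x,y}B_{x^y,z}$, $B_{x+y,z}=B_{x,z^y}B_{y,z}$, $B_{c,Q_cu}B(c^{Q_cu},w)=B(c,Q_cu+w)$), combined with the covariance \er{1.8} applied to $h=B_{v,c}\in K^\Cl$, then make the factor $B_{c,a}$ with $a=u+Q_cu+\{c;v;v\}$ appear automatically. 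Your argument contains no mechanism that generates the quadratic correction $\{v;v;c\}$; you correctly identify it as ``the main obstacle'' but do not overcome it.

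A second, smaller gap: verifying the $v=0$ case only ``in the matrix model $Z^2=\Cl^{r\xx r}$'' proves nothing for exceptional Peirce $2$-spaces (e.g.\ $Z^2=\HL_3^\Cl(\Ol)$ for a maximal tripotent in the $27$-dimensional domain, or a spin factor for rank-$2$ tripotents), and treating these uniformly is one of the stated points of the Jordan-theoretic formulation. One could try to rescue this step via Macdonald's theorem, since the identity involves only $u$, $Q_cu$ and the unit $c$ of the Jordan algebra $Z^2$, but that would need to be stated and checked. As written, neither half of your two-stage plan is complete, and the second half in particular is a restatement of the problem rather than a proof.
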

\begin{proof} We have $c^v=c$ and
$$B_{v,c}c=c-\{v;c;c\}+Q_v\,Q_cc=c-v+Q_vc$$
since $Q_cc=c.$ Using the addition formulae in \cite[Appendix, A3]{L1} this implies
\be{2.2}\begin{split}z^c&=(v+u)^c=v^c+B_{v,c}^{-1}u^{(c^v)}=v^c+B_{v,c}^{-1}u^c\\
&=B_{v,c}^{-1}(v-Q_vc)+B_{v,c}^{-1}u^c=-c+B_{v,c}^{-1}(c+u^c)\end{split}.\ee
The Peirce multiplication rules \cite[Theorem 3.13]{L1} imply $Q_vu\in Z^0$ and $Q_vv\in Z^1.$ Since $Q_c(Z^1\op Z^0)=0$ it follows that
$Q_c\,Q_vz=Q_c\,Q_v(u+v)=0.$ Since $\{c;v;u\}=0$ it follows from \cite[JP 35]{L1} that
$$B_{c,v}^{-1}z=B_{c^v,-v}z=B_{c,-v}z=u+v+\{c;v;u+v\}+Q_c\,Q_vz=u+\{c;v;v\}+v.$$
Using the addition formulae again we obtain
$$B(z^c,z^c)\,B_{c,z}=B(c+z^c,z)=B(B_{v,c}^{-1}(c+u^c),B_{c,v}(u+\{c;v;v\}+v))$$
$$=B_{v,c}^{-1}\,B(c+u^c,u+\{c;v;v\}+v)B_{v,c}$$
by applying \er{1.8} to $h=B_{v,c}\in K^\Cl.$ Since $c+u^c=c^{Q_cu}$ the addition formula \cite[A3]{L1} implies
$$(c+u^c)^w=(c^{Q_cu})^w=c^{Q_cu+w}$$
for any $w\in Z,$ and \cite[JP 33]{L1} yields
$$B_{u,c}\,B(c+u^c,w)=B_{c,Q_cu}\,B(c+u^c,w)=B_{c,Q_cu}\,B(c^{Q_cu},w)=B(c,Q_cu+w).$$
Putting $w:=u+\{c;v;v\},$ the definition of $a$ gives
$$(c+u^c)^{u+\{c;v;v\}}=c^{Q_cu+u+\{c;v;v\}}=c^a$$
and
$$B_{u,c}\,B(c+u^c,u+\{c;v;v\})=B(c,Q_cu+u+\{c;v;v\})=B_{c,a}.$$
Now $B_{z,c}=B_{u+v,c}=B_{u,c^v}\,B_{v,c}=B_{u,c}\,B_{v,c}$ by \cite[JP 34]{L1}, it follows with \cite[JP 34]{L1} that
$$B_{z,c}\,B(z^c,z^c)\,B_{c,z}\,B_{v,c}^{-1}=B_{u,c}\,B(c+u^c,u+\{c;v;v\}+v)$$
$$=B_{u,c}\,B(c+u^c,u+\{c;v;v\})\,B((c+u^c)^{u+\{c;v;v\}},v)=B_{c,a}\,B_{c^a,v}$$
$$=B(c,a+v)=B(c,v+a)=B_{c,v}\,B(c^v,a)=B_{c,v}\,B_{c,a}.$$
\end{proof}

\begin{corollary}\label{b} We have $(u+v)^c\in D$ if and only if $u+Q_cu+\{c;v;v\}\in D^2=D\ui Z^2.$
\end{corollary}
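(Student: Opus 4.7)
The plan is to read off the corollary directly from the formula in the Proposition,
$$B(z^c, z^c) = B_{z,c}^{-1}\,B_{c,v}\,B_{a,c}\,B_{v,c}\,B_{c,z}^{-1},$$
combined with the Jordan-theoretic characterization of a bounded symmetric domain as (the connected component of $0$ in) the locus of positive definiteness of the Bergman operator, taken with respect to the $K$-invariant inner product.

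The first step will be to verify that $a = u + Q_c u + \{v;v;c\}$ is self-adjoint in the Jordan $*$-algebra $(Z^2, c)$: the sum $u + Q_c u$ is the ``real part'' of $u$ under the involution induced by $c$, and a short Peirce calculation (using the vanishing of $Q_c$ on $Z^1 \oplus Z^0$) shows $\{v;v;c\} \in Z^2$ is self-adjoint as well. This realness of $a$ forces $\{a;c;w\} = \{c;a;w\}$ and $Q_a Q_c = Q_c Q_a$, hence $B(a,c) = B(c,a)$. Combined with the identity $B(x,y)^\ast = B(y,x)$ for the $K$-invariant inner product, the Proposition's formula then rewrites as the congruence
$$B(z^c, z^c) = M^\ast\,B(a,c)\,M, \qquad M := B_{v,c}\,B_{c,z}^{-1},$$
in which $M$ is invertible: the quasi-invertibility of $(u+v,c)$ is granted by the Proposition, and that of $(v,c)$ is automatic for $v \in Z^1$. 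Therefore $B(z^c, z^c)$ is positive definite on $Z$ if and only if $B(a,c)$ is.

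The second step will be to analyse $B(a,c)$ via the Peirce decomposition $Z = Z^2 \oplus Z^1 \oplus Z^0$ relative to $c$. Using the multiplication rules together with $Q_c|_{Z^1 \oplus Z^0} = 0$, the operator $B(a,c)$ preserves this decomposition, equals the identity on $Z^0$, and restricts on $Z^2$ to the Bergman operator $B^{Z^2}(a,c)$ of the subtriple $Z^2$. Its positivity on $Z$ is then governed by that of $B^{Z^2}(a,c)$, which in turn (since $a$ is real in $(Z^2,c)$) characterizes $a \in D^2$.

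The main obstacle I expect is the last step of matching the correct connected components: the positivity locus $\{w : B(w,w) > 0\}$ is in general disconnected, so strict positivity of the Bergman operator is formally weaker than membership in $D$. This is resolved by a continuity argument, using that at $(u,v) = (0,0)$ both $z^c = 0$ and $a = 0$ lie in their respective domains and that the maps $(u,v) \mapsto z^c$ and $(u,v) \mapsto a$ are continuous on the parameter space $D^2 \times Z^1$, so the correct connected components are preserved throughout.
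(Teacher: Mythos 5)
Your route is the paper's own: read the Proposition's factorization as a congruence $B(z^c,z^c)=M^*B(a,c)M$ with $M=B_{v,c}B_{c,z}^{-1}$ invertible (using $B(x,y)^*=B(y,x)$ together with the self-adjointness $a=Q_ca$), so that positive definiteness of $B(z^c,z^c)$ is equivalent to that of $B(a,c)$, and then invoke the characterization of $D$ by positivity of the Bergman operator. Up to the reduction ``$B(z^c,z^c)>0$ if and only if $B(a,c)>0$'' your plan and the printed proof coincide, and your Peirce analysis of $B(a,c)$ is a harmless elaboration.

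The step that fails is precisely the one you flag, and the continuity argument you propose does not repair it. The difficulty is not that $\{a:\,B(a,c)>0\}$ is disconnected with $D^2$ as one of its components: restricted to self-adjoint elements of $Z^2$ this set is \emph{connected} and \emph{strictly larger} than $D^2$, being a one-sided ``Siegel half-space'' condition rather than the two-sided spectral condition. Concretely, for a minimal tripotent $c$ and $a=\la c$ with $\la$ real, $B(a,c)$ acts as the scalar $(1-\la)^2$ on $Z^2_c$, as $1-\la$ on $Z^1_c$, and as the identity on $Z^0_c$, so $B(a,c)>0$ exactly when $\la<1$, whereas $a\in D^2$ requires $|\la|<1$. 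Taking $u=-\f23c$ and $v=0$ gives $z^c=-\f25c\in D$ while $a=u+Q_cu=-\f43c\notin D^2$; hence no component-chasing can upgrade $B(a,c)>0$ to $a\in D^2$. What the congruence genuinely proves is that $(u+v)^c\in D$ if and only if $B(c,a)>0$, i.e.\ the one-sided inequality, and that is also the form actually used in the proof of the theorem that follows, where the condition appears as $\f u{u+1}+\f{\o u}{\o u+1}+\<\f v{u+1}|\f v{u+1}\><1$ with no absolute value. (The paper's own one-line proof makes the same jump from $B(c,a)>0$ to $a\in D^2$, so your proposal is faithful to it; but to obtain a correct argument you should state the conclusion as the positivity of $B(c,a)$, not as membership of $a$ in the bounded domain $D^2$.)
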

\begin{proof} By \cite[Corollary 3.15, Theorem 4.1]{L1} the domain $D$ is characterized by
$$D=\{\lz\in Z:\,B(\lz,\lz)>0\mbox{ positive definite}\}.$$
Now the proposition above implies that $B((u+v)^c,(u+v)^c)>0$ if and only if $B(c,a)>0.$ Since $a=Q_ca\in Z^2$ is self-adjoint, this is equivalent to the condition $a\in D^2.$
\end{proof}

\begin{proposition}\label{c} The quasi-determinant $\lD$ satisfies
$$\lD((u+v)^c,(u+v)^c)=|\lD(u,c)|^{-2}\,\lD(a,c).$$
\end{proposition}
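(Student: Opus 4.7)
The plan is to apply the determinant to the factorization of $B(z^c,z^c)$ in the previous proposition and convert it into a scalar identity for $\lD$ via \er{1.9}. Since $\det B(x,y)=\lD(x,y)^p$, taking $\det$ of
$$B(z^c,z^c)=B_{z,c}^{-1}\,B_{c,v}\,B_{a,c}\,B_{v,c}\,B_{c,z}^{-1}$$
yields
$$\lD(z^c,z^c)^p=\lD(z,c)^{-p}\,\lD(c,v)^p\,\lD(a,c)^p\,\lD(v,c)^p\,\lD(c,z)^{-p}.$$

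The main work is then two Peirce-theoretic simplifications with respect to the tripotent $c$. First, for $v\in Z^1$ I claim $\lD(v,c)=\lD(c,v)=1$. The Peirce rules $\{Z^i;Z^j;Z^k\}\ic Z^{i-j+k}$ (with $Z^\la=0$ for $\la\notin\{0,1,2\}$) imply that $D(v,c)$ lowers the Peirce grading by one, that $Q_c$ annihilates $Z^0\op Z^1$, and that $Q_v$ swaps $Z^2$ with $Z^0$; hence $B(v,c)=I-D(v,c)+Q_vQ_c$ is block-unipotent with respect to $Z=Z^2\op Z^1\op Z^0$, so $\det B(v,c)=1$. The polynomial $v\mapsto\lD(v,c)$ then takes values in the $p$-th roots of unity on the connected space $Z^1$ and equals $1$ at $v=0$, so it is identically $1$; the same argument applied to $B(c,v)$ gives $\lD(c,v)=1$. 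Second, for $u\in Z^2$ and $v\in Z^1$ I claim $\lD(u+v,c)=\lD(u,c)$. This uses the addition formula $B(u+v,c)=B_{u,c^v}\,B_{v,c}$ from \cite[JP 34]{L1} (already invoked in the previous proof) together with $c^v=c$ (verified there from $Q_cv=0$ and $B(c,v)c=c$); combined with $\det B(v,c)=1$, this gives $\det B(u+v,c)=\det B(u,c)$, and the same branch-continuity argument yields $\lD(u+v,c)=\lD(u,c)$.

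Substituting these two identities, and invoking the sesqui-symmetry $\lD(c,z)=\o{\lD(z,c)}$ (a consequence of the Hermitian property of $K^D$), I obtain
$$\lD(z^c,z^c)^p=\lD(u,c)^{-p}\,\o{\lD(u,c)}^{-p}\,\lD(a,c)^p=|\lD(u,c)|^{-2p}\,\lD(a,c)^p.$$
To conclude, I extract the principal $p$-th root: both sides are continuous on a connected open neighborhood of $u=v=0$, where each takes the value $1$ (since $z^c=0$, $a=0$ and $\lD(0,c)=1$), so the same branch of the $p$-th root works throughout. The chief obstacle is the Peirce-theoretic computation that $\det B(v,c)=1$ for $v\in Z^1$; once this block-unipotent structure is established, the five-factor determinantal product collapses cleanly into the desired closed form.
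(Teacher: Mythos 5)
Your proposal is correct and follows essentially the same route as the paper: the key steps in both are the block-unipotent (triangular) form of $B_{v,c}$ with respect to the Peirce decomposition giving $\det B_{v,c}=1$, the addition formula yielding $\det B_{u+v,c}=\det B_{u,c}$, and then taking determinants in the factorization of $B(z^c,z^c)$ together with $\det B(x,y)=\lD(x,y)^p$. You merely make explicit two points the paper leaves implicit, namely the sesqui-symmetry $\lD(c,z)=\o{\lD(z,c)}$ and the continuity/connectedness argument for extracting the $p$-th root, both of which are sound.
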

\begin{proof} The Peirce multiplication rules imply for $z=z_2+z_1+z_0$
$$B_{v,c}z=z-\{v;c;z_2\}-\{v;c;z_1\}+Q_vQ_cz_2$$
with $\{v;c;z_2\}\in Z^1,\,\{v;c;z_1\}\in Z^0$ and $Q_vQ_cz_2\in Z^0,$ while all other components vanish. It follows that
\be{2.3}B_{v,c}=\cc100{-D(v,c)}10{Q_vQ_c}{-D(v,c)}1\ee
is a lower triangular matrix with respect to the Peirce decomposition $Z=\ca{Z^2}{Z^1}{Z^0}.$ Therefore we have 
$$\det B_{v,c}=1.$$ 
As a consequence, the operator $B_{u+v,c}=B(u,c^v)\,B_{v,c}=B_{u,c}\,B_{v,c}$ satisfies
$$\det B_{u+v,c}=\det B_{u,c}\,\det B_{v,c}=\det B_{u,c}.$$
Since $\lD(x,y)^p=\det B(x,y),$ the assertion follows.
\end{proof}

We now specialize to tripotents $c$ of rank 1. Then $Z^2=\Cl\,c$ and $D^2=\{uc:\ u\in\Cl,|u|<1\}$ is the unit disk. Since
$\dim Z^1=a(r-1)+b$ it follows that
$$\dim Z^2\op Z^1=1+a(r-1)+b=p-1.$$
Since Peirce spaces are orthogonal, $Z^2\op Z^1$ has the inner product $\<uc+v|u'c+v'\>=u\o u'+\<v|v'\>.$ Hence the unit ball 
$\Bl\ic Z^2\op Z^1$ has the Bergman kernel
$$K(uc+v,u'c+v')=(1-u\o u'-\<v|v'\>)^{-p},$$
where $uc+v\in Z^2\op Z^1.$

\begin{theorem} Let $uc+v\in Z^2\op Z^1$ with $|u|<1.$ Then the map
\be{2.1}(u,v)\mapsto G_c(u,v):=\(\f{uc+v}{u+1}\)^c\ee
defines a holomorphic (Bergman) isometry from the unit ball $\Bl\ic Z^2\op Z^1$ into $D.$ More precisely, we have $G_c(u,v)\in D$ if and
only if $|u|^2+\<v|v\><1,$ and the quasi-determinant satisfies
$$\lD(G_c(u,v),G_c(u',v'))=1-u\o u'-\<v|v'\>.$$
Therefore
$$K^D(G_c(u,v),G_c(u',v'))=(\lD(G_c(u,v),G_c(u',v'))^{-p}=(1-u\o u'-\<v|v'\>)^{-p}=K^\Bl(uc+v,u'c+v').$$
\end{theorem}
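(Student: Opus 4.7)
The plan is to realize $G_c(u,v)$ as the quasi-inverse studied in Proposition~2.1 and Proposition~\ref{c}, by setting
$$\t u := \f{u}{u+1}\,c\in Z^2,\qquad \t v := \f{v}{u+1}\in Z^1,$$
so that $G_c(u,v)=(\t u+\t v)^c$. First I would check that $(\t u+\t v,c)$ is quasi-invertible throughout $|u|<1$: $B_{\t u+\t v,c}=B_{\t u,c}B_{\t v,c}$ by \cite[JP~34]{L1}, the second factor is unipotent lower triangular in the Peirce decomposition by \er{2.3}, and the first is invertible since $\f{u}{u+1}\ne 1$.

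Next I would compute the ``real part'' $\t a:=\t u+Q_c\t u+\{\t v;\t v;c\}$ appearing in Proposition~2.1. Conjugate linearity of $Q_c$ gives $Q_c\t u=\f{\o u}{\o u+1}\,c$. For $\{\t v;\t v;c\}$: the Peirce rule $\{Z^1;Z^1;Z^2\}\ic Z^2=\Cl c$ forces this to lie in $\Cl c$; pairing with $c$ and using $D(x,y)^*=D(y,x)$ together with $\{y;c;c\}=y$ for $y\in Z^1$ gives $\<\{\t v;\t v;c\}|c\>=\<\t v|\t v\>$, hence $\{\t v;\t v;c\}=\f{\<v|v\>}{|u+1|^2}\,c$. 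Therefore $\t a=\ll\,c$ with
$$\ll:=\f{2|u|^2+2\,\mathrm{Re}(u)+\<v|v\>}{|u+1|^2}\in\Rl.$$
Applying Proposition~\ref{c} together with the standard identity $\lD(\la c,c)=1-\la$ on the rank-one sub-triple $\Cl c$ (read off from \er{1.7}, since partitions with two or more parts vanish on the rank-$1$ element $c$) yields $\lD(\t u,c)=\f{1}{u+1}$ and $\lD(\t a,c)=\f{1-|u|^2-\<v|v\>}{|u+1|^2}$, hence
$$\lD(G_c(u,v),G_c(u,v))=|\lD(\t u,c)|^{-2}\,\lD(\t a,c)=1-|u|^2-\<v|v\>.$$
The off-diagonal identity then follows by sesqui-holomorphic polarization: both sides are holomorphic in $(u,v)$ and antiholomorphic in $(u',v')$, and they coincide on the diagonal. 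The kernel formula is immediate from \er{1.9}, and the isometry assertion from \er{1.2}.

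For the image statement, Proposition~2.1 writes $B(G_c,G_c)=A\,B_{\t a,c}\,A^*$ with $A:=B_{\t u+\t v,c}^{-1}B_{c,\t v}$ invertible, so $B(G_c,G_c)>0$ iff $B_{\t a,c}>0$. Since $\t a=\ll\,c$ with $\ll$ real, $B_{\t a,c}$ is diagonal in the Peirce decomposition with eigenvalues $(1-\ll)^2,\,1-\ll,\,1$ on $Z^2,Z^1,Z^0$, hence positive definite precisely when $\ll<1$, equivalently $|u|^2+\<v|v\><1$. Combined with $D=\{\zeta:B(\zeta,\zeta)>0\}$ (the connected component of $0$), continuity of $G_c$, and $G_c(0,0)=0\in D$, this proves that $G_c$ sends the ball into $D$; conversely, $|u|^2+\<v|v\>\ge 1$ forces $B_{\t a,c}$ to fail positive definiteness, so $G_c(u,v)\notin D$. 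I expect the most delicate step to be this $B>0$/connectedness argument, since the diagonal quasi-determinant $\lD(G_c,G_c)>0$ does not by itself characterize membership in $D$ in higher rank.
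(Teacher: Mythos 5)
Your proposal is correct and follows essentially the same route as the paper: membership in $D$ is reduced via Proposition 2.1/Corollary 2.2 to positivity of $B_{\t a,c}$ with $\t a=\ll c$, $\ll=\f{2|u|^2+2\mathrm{Re}(u)+\<v|v\>}{|u+1|^2}$, and the kernel identity comes from Proposition 2.3 together with sesqui-holomorphic polarization off the diagonal. The only difference is that you make explicit several steps the paper leaves implicit --- the computation $\{c;\t v;\t v\}=\f{\<v|v\>}{|u+1|^2}\,c$, the identity $\lD(\ll c,c)=1-\ll$, and the eigenvalues $(1-\ll)^2,\,1-\ll,\,1$ of $B_{\t a,c}$ showing that the relevant condition is the one-sided inequality $\ll<1$ (together with the connected-component caveat for $\{B>0\}$), which is a legitimate point of care but not a different method.
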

\begin{proof} According to Corollary \er{b} we have $G_c(u,v)\in D$ if and only if
$$\f u{u+1}+\f{\o u}{\o u+1}+\<\f v{u+1}|\f v{u+1}\><1.$$
Multiplying by $|u+1|^2$ the condition becomes
$$u(\o u+1)+\o u(u+1)+\<v|v\><|u+1|^2.$$
This is equivalent to $|u|^2+\<v|v\><1.$ The second assertion follows from Proposition \ref{c}.
\end{proof}

An explicit formula for $G_c$ follows from formula \er{2.2} and the matrix form \er{2.3} of $B_{v,c}$. Indeed we have
$$B_{v,c}^{-1}=\cc100{D(v,c)}10{-Q_vQ_c+D(v,c)^2}{D(v,c)}1$$
and 
\be{2.4}G_c(uc,v)=\(\f{uc+v}{u+1}\)^c=uc+v+\f1{1+u}Q_vc,\ee
by using \er{2.3}. We note the following identities
\be{2.5}G_c'(0)=\li,\ G_c''(0)(c,c)=0,\ G_c''(0)(c,v)=0,\ G_c''(0)(v,v)=Q_vc,\quad v\in Z^1.\ee

\begin{example} Consider the special case of matrices $Z=\Cl^{r\xx s},$ including the rectangular case where $r<s.$ We assume that 
$r>1$, since $r=1$ corresponds to the unit ball. As the construction is invariant under the action of $K$ on $Z,$ we may assume that 
$c=e_1=\bb1000$ is the first matrix unit. Then
$$Z^2\op Z^1=\{\bb uv{\t v}0:\,u\in\Cl,\,v\in\Cl^{1\xx(s-1)},\,\t v\in\Cl^{(r-1)\xx 1}\}\al\Cl^{p-1},$$
where the genus $p=2+a(r-1)+b=2+2(r-1)+(s-r)=r+s.$ For matrices, the quasi-inverse is given by
$$z^w=(1-zw^*)^{-1}z=z(1-w^*z)^{-1}.$$
It follows that the embedding $G_c$ is given by
$$G_c\bb uv{\t v}0=\bb uv{\t v}{\t v(u+1)^{-1}v}.$$
Note that 
$$G_c(z)+c=\bb{u+1}v{\t v}{\t v(u+1)^{-1}v}=\bb10{\f{\t v}{u+1}}1\bb{u+1}000\bb1{\f{v}{u+1}}01$$
is a rank 1 element of $Z.$
\end{example}

\begin{definition}\label{2.a} An isometric embedding $F:\Bl\to D\ic Z$ satisfying $F(0)=0$ will be called a {\bf Mok embedding} if there exists a minimal tripotent $c\in Z$ such that $F'(0)$ is an isometry from $\Cl^d$ onto $Z^2\op Z^1,$ and 
$$F(z)=G_c(F'(0)z)$$
for all $z\in\Bl,$ where $G_c$ is given by \er{2.1} or \er{2.4}. 
\end{definition}

If $F$ is a Mok embedding with minimal tripotent $c,$ then for all $U\in U(d)$ and $k\in K=Aut(Z),$ the composite $k\oc F\oc U$ is again a Mok embedding, with derivative $F'(0)\oc U$ and minimal tripotent $kc.$ In particular, for a constant $\lt\in\Tl$ we may put
\be{2.6}F_\lt(z):=\o\lt F(\lt z)\ee
and obtain a Mok embedding with $F'_\lt(0)=F'(0)$ and second derivative $F''_\lt(0)(v)=\o\lt F''(0)(\lt v)=\lt F''(0)(v),$ viewed as a quadratic form in $v\in\Cl^d.$

\begin{example} For the Lie ball by direct computation the mapping $G_c:\Bl\to D$ is given by
$$G_c(z_1,z')=z_1c+q(z)\o c+z',\quad z=z_1 c\op z'\in\Cl c\op Z^1\ic\Cl c\op Z^1\op\Cl\o c=Z$$
where $\{c,\o c\}$ is a frame of minimal tripotents and $q(z)=\f{(z',z')}{1+z_1}$. Then $G_c$ is an isometric embedding
satisfying the equation for the reproducing kernel \er{1.2}. This class of mappings has also been constructed in \cite{XY} using explicit coordinates.

\end{example}

\begin{remark} Considering the isometric embedding $\Bl_2\to\Bl_3: (z_1,z_2) \to (0,z_1,z_2)$ and Mok's example in the case of 
$\Bl_2\to D\ic\Cl^{2\xx 2}$, we have a non-standard isometric mapping $F:(\Bl_2,\f23 ds^2_{\Bl_2}) \to (D,ds^2_D)$ defined by
$$F(z_1,z_2)=\bb 0{z_1}{z_2}{z_1z_2}.$$
In \cite{Mok-ALM, Mok-Ng} it is shown that a non-standard holomorphic isometry $F:(\Bl_1,\ll ds_{\Bl_1}^2)\to (\Bl_1^n,ds^2_{\Bl_1^n})$ from the unit disc into a polydisc must have a singularity at some boundary point $b\in\dl\Bl_1.$ It is asked in \cite[Problem 5.2.2]{Mok-ALM} whether this is also true for isometric mappings from the unit disc into a bounded symmetric domain. Note that the isometry $z\mapsto(\f z{\F 2}, \f z{\F 2})\mapsto F(\f z{\F 2},\f z{\F 2})$ is a non-standard polynomial isometry $F:(\Bl_1,3ds_{\Bl_1}^2)\mapsto(D,ds_D^2).$ This example shows that there are non-standard polynomial maps from the unit disk and also balls into symmetric domains. This has also been observed in \cite{XY}. It would be interesting to classify general polynomial isometric maps $F:(\Bl,\ll ds_\Bl^2)\mapsto (D,ds_D^2)$ for positive integers $\ll.$
\end{remark}

\section{Differential geometric characterization of Mok type mappings}
A main result of this paper is a differential geometric characterization of Mok type embeddings $F$ in terms of a vanishing condition for the second fundamental form of the image $F(\Bl)\subset D,$ cf. Theorem \ref{3.b} and Remark \ref{3.a}. As a consequence we classify all isometric embeddings into symmetric domains of arbitrary rank, including domains not of tube type, which satisfy this condition. Here the Jordan theoretic approach is crucial in dealing with the exceptional domains. 

We fix $\Bl=\Bl_d$ the unit ball in $V:=\Cl^d.$ 

\begin{lemma}\label{3.f} Let $F:\Bl\to D$ be a Bergman isometry with $F(0)=0$. Then
$$F(z)-F'(0)z\in Ran\ F'(0)^\perp$$
\end{lemma}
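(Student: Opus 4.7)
My plan is to exploit the kernel form \er{1.3} of the isometric condition together with the polynomial expansion \er{1.7} of the quasi-determinant, and then to compare homogeneous bi-degrees in $(z,\o z).$ First I would write $F(z)=F'(0)z+H(z),$ where $H(z)=\S_{m\ge 2}H_m(z)$ is the Taylor expansion of the higher-order part, with each $H_m:V\to Z$ a homogeneous holomorphic polynomial of degree $m.$ Evaluating \er{1.1} at $z=w=0,$ where both Bergman operators reduce to the identity, shows that $F'(0):V\to Z$ is a complex-linear isometry for the $K$-invariant inner product on $Z.$

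Next, substituting \er{1.7} into \er{1.3}, using $C_0=C_1=1,$ $E^{(0,\ldots,0)}(x,y)=1$ and $E^{(1,0,\ldots,0)}(x,y)=\<x|y\>,$ and rearranging yields
$$\<F(z)|F(z)\>=\<z|z\>+\S_{k=2}^r(-1)^kC_k\,E^{(1^k)}(F(z),F(z)).$$
Expanding the left-hand side as
$$\<F(z)|F(z)\>=\<F'(0)z|F'(0)z\>+\<F'(0)z|H(z)\>+\<H(z)|F'(0)z\>+\<H(z)|H(z)\>,$$
counting bi-degrees in $(z,\o z)$ will then be the key step. Each kernel $E^{(1^k)}(x,y)$ is bihomogeneous of bi-degree $(k,k),$ so after substituting the holomorphic map $F$ every $k\ge 2$ summand on the right has joint bi-degree $\ge(2,2).$ Consequently neither $\<z|z\>$ nor the $E^{(1^k)}$-sum contains any bi-degree $(1,m)$ term with $m\ge 2,$ while among the four pieces of $\<F(z)|F(z)\>$ the only contribution of bi-degree $(1,m)$ is the cross term $\<F'(0)z|H_m(z)\>.$ Equating bi-degrees therefore gives
$$\<F'(0)z|H_m(z)\>=0\qquad(z\in V,\ m\ge 2).$$

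The final step is to upgrade this to $\<F'(0)w|H_m(z)\>=0$ for all $w,z\in V,$ which is precisely the claimed orthogonality, by a polarization argument. Writing $H_m(z)=\h H_m(z,\ldots,z)$ for the symmetric $m$-linear extension $\h H_m:V^m\to Z,$ one replaces $z$ by $z+\lz w$ in the identity above and treats $\lz,\o\lz$ as independent formal indeterminates. Antilinearity of the inner product in the second slot then forces the coefficient of $\o\lz^m$ (with $\lz^0$), which equals $\<F'(0)z|H_m(w)\>,$ to vanish. Summing over $m\ge 2$ yields $F(z)-F'(0)z=H(z)\in Ran\,F'(0)^\perp.$ The main point I anticipate requiring care is the bi-degree bookkeeping: one must verify that no cancellation is possible between the higher Peirce kernels $E^{(1^k)}$ for $k\ge 2$ and the cross term, which is guaranteed precisely because each such $E^{(1^k)}$ vanishes to joint order $(k,k)\ge(2,2)$ at the origin of $Z\xx Z$ and hence cannot produce any bi-degree $(1,m)$ contribution under the substitution $x=y=F(z).$ Once this is noted, the remainder is a routine polarization.
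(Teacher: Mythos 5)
Your argument is correct, but it follows a genuinely different route from the paper's. The paper works directly with the metric identity \er{1.1}: setting $w=0$ and using $F(0)=0$ makes both Bergman operators the identity, so $\<F'(z)x|F'(0)y\>=\<x|y\>$ for all $z\in\Bl$, whence $\<H'(z)x|F'(0)y\>=0$ for $H(z)=F(z)-F'(0)z$; since $H(0)=0$, writing $H(z)=\I_0^1 dt\,H'(tz)z$ places $H(z)$ in the closed subspace $Ran\ F'(0)^\perp$. That is a two-line, purely first-order computation needing neither the quasi-determinant nor the Faraut--Koranyi expansion. You instead start from the potential-level identity \er{1.3} (whose derivation from \er{1.1} is itself quoted from Mok in Section 1), expand $\lD$ via \er{1.7}, and isolate the bi-degree $(1,m)$ components; your bookkeeping is sound --- each $E^{(1^k)}$ with $k\ge 2$ vanishes to joint order $(k,k)\ge(2,2)$ after substituting $x=y=F(z)$, so the only $(1,m)$ contribution with $m\ge2$ is the cross term $\<F'(0)z|H_m(z)\>$ --- and the polarization in $\lz,\o\lz$ is routine. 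What your approach buys is the graded refinement $\<F'(0)w|H_m(z)\>=0$ for each homogeneous piece separately, and it anticipates the bi-degree comparison technique the paper uses later in Lemma \ref{3.i}; what it costs is reliance on the equivalence of \er{1.1} and \er{1.3} and on the expansion \er{1.7}, neither of which the paper's shorter argument requires.
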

\begin{proof} Let $H(z)=F(z)-F'(0)z.$ Putting $w=0$ we obtain from \er{1.1}
$$\<x|y\>=\<F'(z)x|F'(0)y\>=\<F'(0)x+H'(z)x|F'(0)y\>=\<x|y\>+\<H'(z)x|F'(0)y\>.$$
Therefore $\<H'(z)x|F'(0)y\>=0$ for all $z\in\Bl,$ showing that $Ran\ H'(z)\ic Ran\ F'(0)^\perp.$ Since $H(0)=0,$ the mean value theorem implies
$$H(z)=\I_{0}^1\ dt\ H'(tz)z\in Ran\ F'(0)^\perp.$$
\end{proof}

Let $v_1,\ldots,v_n\in V$ be tangent vectors. For a holomorphic function $f:\Bl\to Z$ we use the notation 
$$\dl^0_{v_1,\ldots,v_n}f:=f^{(n)}(0)(v_1,\cdots,v_n)$$
to denote the $n$-th derivative $f^{(n)}(0)$ as a symmetric multilinear map $f^{(n)}(0):\odot^n V\to Z$. Similarly, for a sesqui-holomorphic map $f(z,w)$ we denote the mixed derivatives of order $(m,n)$ by 
$$\dl^{0,0}_{u_1,\ldots,u_m,\o v_1,\ldots,\o v_n}f=f^{(m,n)}(0,0)(u_1,\ldots,u_m,\o v_1,\ldots,\o v_n).$$
For subsets $I=\{i_1,\ldots,i_k\}\ic\{1,\ldots,m\},\ J=\{j_1,\ldots,j_\el\}\ic\{1,\ldots,n\}$ we put $u_I:=(u_{i_1},\ldots,u_{i_k})$ and $(u_I,\o v_J):=(u_{i_1},\ldots,u_{i_k},\o v_{j_1},\ldots,\o v_{j_\el}).$ Here the vectors $u_1,\ldots,u_m$ or $v_1,\ldots,v_n$ need not be distinct.

\begin{lemma}\label{3.d} Let $F,G:\Bl\to D$ be two Bergman isometries preserving the origin, which agree up to order $n\ge 0$ at $0$,
i.e. $F^{(j)}(0)=G^{(j)}(0)$, $0\le j\le n$. Then
$$\<\dl^0_{u_0,\ldots,u_m}F|\dl^0_{v_0,\ldots,v_n}F\>=\<\dl^0_{u_0,\ldots,u_m}G|\dl^0_{v_0,\ldots,v_n}G\>$$
for $m\le n$ and all vectors $u_i,v_j\in V.$ In other words, we have 
$$F^{(n+1)}(0)^*F^{(m+1)}(0)=G^{(n+1)}(0)^*G^{(m+1)}(0):\odot^{m+1}\Cl^d\to Z.$$
\end{lemma}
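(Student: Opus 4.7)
The plan is to extract the identity from the isometric condition via Taylor expansion. By \er{1.3} we have $\lD(F(z),F(z))=1-\<z|z\>$ on $\Bl$; both sides are the restrictions to the diagonal of sesqui-holomorphic functions of $(z,w)$, so by sesqui-holomorphic polarization
$$\lD(F(z),F(w))=1-\<z|w\>=\lD(G(z),G(w)).$$
Using the explicit expansion \er{1.7}, we may write
$$\lD(F(z),F(w))=1-\<F(z)|F(w)\>+\S_{k=2}^r(-1)^k C_k\,E^\k(F(z),F(w)),$$
where $\k$ denotes the partition with $k$ ones and each kernel $E^\k$ is sesqui-polynomial of bidegree exactly $(k,k)$ in its two arguments.

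Writing $F(z)=\S_{j\ge 1}F_j(z)$ for the homogeneous Taylor components of $F$ at $0$, I would then expand both sides of the displayed identity as bi-Taylor series in $(z,\o w)$ about the origin and read off the coefficient of bidegree $(m+1,n+1)$. For the $k$-th summand with $k\ge 2$, substituting $F(z)=\S_j F_j(z)$ and $F(w)=\S_i F_i(w)$ into the multilinear polarization of $E^\k$ produces contributions indexed by compositions $j_1+\cdots+j_k=m+1$ and $i_1+\cdots+i_k=n+1$ with every $j_l,i_l\ge 1$. This forces $j_l\le m+2-k\le m$ and $i_l\le n+2-k\le n$; since $m\le n$, \emph{every} Taylor component of $F$ that appears has order at most $n$. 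Thus by the hypothesis $F_j=G_j$ for $j\le n$, all $k\ge 2$ contributions are identical for $F$ and $G$.

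The only remaining contribution, coming from the $k=1$ term $-\<F(z)|F(w)\>$, is exactly $-\<F_{m+1}(z)|F_{n+1}(w)\>$ (and similarly for $G$). Matching the $(m+1,n+1)$-coefficient on both sides of $\lD(F(z),F(w))=\lD(G(z),G(w))$ yields the polynomial identity
$$\<F_{m+1}(z)|F_{n+1}(w)\>=\<G_{m+1}(z)|G_{n+1}(w)\>$$
on $V\xx V$ (the edge case $m=n=0$ being consistent, since both sides then reduce to $\<z|w\>$ by the isometry at the origin). Polarizing this polynomial identity in $z$ and $\o w$ gives the stated multilinear formula, equivalently $F^{(n+1)}(0)^*F^{(m+1)}(0)=G^{(n+1)}(0)^*G^{(m+1)}(0)$. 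The main technical point---and the step requiring the most care---is the combinatorial observation above: it relies essentially on the ``balanced'' $(k,k)$-bidegree structure of $\lD$ revealed by \er{1.7}, which is precisely what allows the single inequality $m\le n$ to separate the top-order components $F_{m+1},F_{n+1}$ from all lower-order ones.
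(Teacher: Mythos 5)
Your proof is correct, but it takes a genuinely different route from the paper's. The paper works directly with the operator-valued isometry identity \er{1.1}: it differentiates $B_F(z,w)=B(F(z),F(w))$ and its inverse, observes that all mixed derivatives $\dl^{0,0}_{u_I,\o v_J}B_F^{-1}$ with $I,J\ic\{1,\ldots,n\}$ involve only derivatives of $F$ of order $\le n$ at $0$ and hence coincide with those of $B_G^{-1}$, and then isolates the single top-order term $\<\dl^0_{u_0,u_M}F|\dl^0_{v_0,v_N}F\>$ in the expansion \er{3.3}. You instead pass to the polarized scalar kernel identity $\lD(F(z),F(w))=1-\<z|w\>$ and exploit the Faraut--Koranyi expansion \er{1.7}, reading off the bidegree-$(m+1,n+1)$ coefficient; the balanced $(k,k)$-bidegree of each $E^\k$ together with $F(0)=0$ forces every homogeneous component of $F$ occurring in the $k\ge 2$ terms to have order at most $\max(m,n)=n$, so only the $k=1$ term sees $F_{m+1}$ and $F_{n+1}$, and polarization finishes the argument. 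Both proofs hinge on the same combinatorial point --- the data of order $\le n$ determines everything except the one top pairing --- but yours is more elementary in that no formula for derivatives of $B^{-1}$ is needed, at the cost of invoking the equivalence of \er{1.1} with the kernel identity and the bidegree structure of \er{1.7}. The paper's operator-level computation has the side benefit that the intermediate formula \er{3.3} is reused verbatim in the following lemma to derive \er{3.4} and \er{3.5}, which your kernel-level argument would not directly provide.
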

\begin{proof} Put 
\be{3.1}B_F(z,w)\lz:=B(F(z),F(w))\lz=\lz-\{F(z);F(w);\lz\}+Q_{F(z)}Q_{F(w)}\lz.\ee
Then we have for $I,J\ic\{1,\ldots,n\}$
$$\dl^{0,0}_{u_I,\o v_J}B_F\lz=-\{\dl^0_{u_I}F;\dl^0_{v_J}F;\lz\}
+\f14\S_{I'\ic I}\S_{J'\ic J}\{\dl^0_{u_{I'}}F; \{\dl^0_{v_{J'}}F;\lz;\dl^0_{v_{J\sm J'}}F\};\dl^0_{u_{I\sm I'}}F\}.$$
Since all derivatives involved are of order $\le n$ the assumption implies
$$\dl^{0,0}_{u_I,\o v_J}B_F=\dl^{0,0}_{u_I,\o v_J}B_G.$$
Using the formula
$$\dl^0_{u_I}B_F^{-1}=\S_{I_1\iu\ldots I_k=I}\pm B_F^{-1}(\dl^0_{u_{I_1}}B_F)B_F^{-1}\ldots B_F^{-1}(\dl^0_{u_{I_k}}B_F)B_F^{-1}
=\S_{I_1\iu\ldots I_k=I}\pm(\dl^0_{u_{I_1}}B_F)\ldots(\dl^0_{u_{I_k}}B_F)$$
at $z=w=0,$ we see that the inverse has the same property:
\be{3.2}\dl^{0,0}_{u_I,\o v_J}B_F^{-1}=\dl^{0,0}_{u_I,\o v_J}B_G^{-1}.\ee
(The sign $\pm$  depends on the degree of the differentiations.) Let $M,N\ic\{1,\ldots,n\}.$ The isometry condition \er{1.1} implies
\be{3.3}\dl^{0,0}_{u_M,\o v_N}\<B_{z,w}^{-1}u_0|v_0\>=\dl^{0,0}_{u_M,\o v_N}\<B_{F(z),F(w)}^{-1}F'(z)u_0|F'(w)v_0\>$$
$$=\S_{I\ic M}\S_{J\ic N}\<\dl^{0,0}_{u_I,\o v_J}B_F^{-1}(\dl^0_{u_0,u_{M\sm I}}F)|\dl^0_{v_0,v_{N\sm J}}F\>$$
$$=\<\dl^0_{u_0,u_M}F|\dl^0_{v_0,v_N}F\>+\S_{I\ne\OO}\S_{J\ne\OO}\<\dl^{0,0}_{u_I,\o v_J}B_F^{-1}(\dl^0_{u_0,u_{M\sm I}}F)|\dl^0_{v_0,v_{N\sm J}}F\>.\ee
In view of \er{3.2}, the second sum for $F$ and $G$ is the same, since all derivatives are of order $\le n$ at $0.$ Since $G$ also satisfies \er{1.1}, the first terms also agree.
\end{proof}

\begin{lemma} Let $F:\Bl\to D$ be a Bergman isometry with $F(0)=0,$ and let $u,v,v_1,\ldots,v_n\in V.$ Let $N=\{1,\ldots,n\}$ and 
$x,y\in V.$ Then
\be{3.4}\<\dl^{0,0}_{u,\o v_N}B_{z,w}^{-1}x|y\>=\<\dl^0_{x,u}F|\dl^0_{y,v_N}F\>
+\S_{J\ne\OO}\<\{\dl^0_uF;\dl^0_{v_J}F;\dl^0_xF\}|\dl^0_{y,v_{N\sm J}}F\>,\ee
\be{3.5}\<\{u;v;x\}|y\>=\<\dl^0_{u,x}F|\dl^0_{v,y}F\>+\<\{\dl^0_uF;\dl^0_vF;\dl^0_xF\}|\dl^0_yF\>\ee
\end{lemma}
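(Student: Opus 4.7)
I will derive \er{3.4} by applying $\dl^{0,0}_{u,\o v_N}$ to both sides of the isometry identity \er{1.1} at $(z,w)=(0,0)$, and I will obtain \er{3.5} as the specialization $N=\{1\}$. The LHS of \er{1.1} produces $\<\dl^{0,0}_{u,\o v_N}B_{z,w}^{-1}x|y\>$ at once; for the RHS I distribute the single $u$-derivative between $B_F^{-1}(z,w):=B(F(z),F(w))^{-1}$ and $F'(z)x$, and the $\o v_j$-derivatives between $B_F^{-1}$ and $F'(w)y$ via Leibniz. Each resulting term is then controlled by the derivatives $\dl^{0,0}_{u,\o v_J}B_F^{-1}$ and $\dl^{0,0}_{\emptyset,\o v_J}B_F^{-1}$ at $(0,0)$ for $J\ic N$.

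The heart of the argument is a vanishing principle coming from $F(0)=0$. Writing $B_F=\je+M$ with $M(z,w)\lz=-\{F(z);F(w);\lz\}+Q_{F(z)}Q_{F(w)}\lz$, every monomial in $M^k$ contains at least $k$ factors of type $F(z)$ (two per $Q$-piece) and at least $k$ factors of type $F(w)$. A mixed derivative at $0$ carrying only a \emph{single} $u$-derivative activates at most one $F(z)$-factor, so every other $F(z)$-factor collapses to $F(0)=0$. This kills $M^k$ for $k\ge 2$, kills the $Q$-summand of $M^1$, and in the expansion $B_F^{-1}=\S_{k\ge 0}(-M)^k$ leaves only
$$\dl^{0,0}_{u,\o v_J}B_F^{-1}\Big|_0\cdot\lz=\{\dl^0_uF;\dl^0_{v_J}F;\lz\}\quad (J\ne\OO),\qquad \dl^{0,0}_{\emptyset,\o v_J}B_F^{-1}\Big|_0=0\quad (J\ne\OO),$$
together with $B_F^{-1}(0,0)=\je$.

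Plugging these into the Leibniz expansion: when $\dl_u$ falls on $F'(z)x$, the $B_F^{-1}$-factor receives only $\o v$-derivatives so only $J=\OO$ survives, contributing $\<F''(0)(x,u)|\dl^0_{y,v_N}F\>=\<\dl^0_{x,u}F|\dl^0_{y,v_N}F\>$; when $\dl_u$ falls on $B_F^{-1}$, only $J\ne\OO$ contributes, giving $\S_{J\ne\OO}\<\{\dl^0_uF;\dl^0_{v_J}F;\dl^0_xF\}|\dl^0_{y,v_{N\sm J}}F\>$ after using $F'(0)x=\dl^0_xF$ and $F'(0)y=\dl^0_yF$. This is \er{3.4}. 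For \er{3.5} I specialize to $N=\{1\}$, $v_1=v$: the LHS becomes $\<\{u;v;x\}|y\>$ because \er{1.14} gives $\dl^{0,0}_{u,\o v}B_{z,w}^{-1}|_0\cdot x=\{u;v;x\}$ (the $Q_zQ_w$-piece is of bi-degree $\ge(2,2)$ at the origin and contributes nothing to the $(1,1)$-derivative), matching the LHS of \er{3.5}. The main obstacle is bookkeeping the Leibniz terms; the substantive content is the vanishing principle, an immediate consequence of $F(0)=0$ and the bi-degree structure of the Bergman operator.
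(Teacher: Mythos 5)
Your proof is correct and follows essentially the same route as the paper: differentiate the isometry identity \er{1.1} at the origin, expand by Leibniz, and use $F(0)=0$ to show that the only surviving contribution of $\dl^{0,0}_{u,\o v_J}B_F^{-1}$ is the single triple-product term $\{\dl^0_uF;\dl^0_{v_J}F;\cdot\}$ (the paper packages the Leibniz expansion as its formula \er{3.3} and computes the derivative of the inverse via a partition formula rather than your Neumann series, but this is only a cosmetic difference).
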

\begin{proof} Any proper partition of the indices $\{u,v_1,\ldots,v_n\}$ involves at least one subset not containing $u$, for which the derivative of $B_F$ at $(0,0)$ vanishes. Hence
  \begin{equation}
    \label{eq:3.5-1}
  \dl^{0,0}_{u,\o v_J}B_F^{-1}=-\dl^{0,0}_{u,\o v_J}B_F.
  \end{equation}
Since $F(0)=0,$ \er{3.1} implies
$$\dl^{0,0}_{u,\o v_J}B_F\lz=-\{\dl^0_u F;\dl^0_{v_J}F;\lz\}
+\f12\S_{J'\ic J}\{\dl^0_u F\{\dl^0_{v_{J'}}F;\lz;\dl^0_{v_{J\sm J'}}F\}F(0)\}=-\{\dl^0_u F;\dl^0_{v_J}F;\lz\}.$$
For $|M|=1$ the formula \er{3.3} simplifies to
\be{3.6}\<\dl^{0,0}_{u,\o v_N}B_{z,w}^{-1}x|y\>=\<\dl^0_{x,u}F|\dl^0_{y,v_N}F\>+\S_{J\ne\OO}\<\dl^{0,0}_{u,\o v_J}B_F^{-1}(\dl^0_xF)|\dl^0_{y,v_{N\sm J}}F\>$$
$$=\<\dl^0_{x,u}F|\dl^0_{y,v_N}F\>+\S_{J\ne\OO}\<\{\dl^0_uF;\dl^0_{v_J}F;\dl^0_xF\}|\dl^0_{y,v_{N\sm J}}F\>.\ee
This proves the first assertion. For $|N|=1$ we obtain from 
\ref{eq:3.5-1}
$$\dl^{0,0}_{u,\o v}B_F^{-1}\lz=-\dl^{0,0}_{u,\o v}B_F\lz=\{\dl^0_uF;\dl^0_vF;\lz\},$$ 
and  \er{3.6} simplifies to
$$\<\dl^0_{x,u}F|\dl^0_{y,v}F\>+\<\{\dl^0_uF;\dl^0_vF;\dl^0_xF\}|\dl^0_yF\>=\<\dl^{0,0}_{u,\o v}B_{z,w}^{-1}x|y\>=-\<\dl^{0,0}_{u,\o v}B_{z,w}x|y\>=\<\{u;v;x\}|y\>,$$
proving the second assertion.
\end{proof}
\begin{remark}\label{3.a} According to \cite[Theorem 2.10]{L1} the {\bf curvature} $R_D$ of an irreducible bounded symmetric domain $D$ at the origin has the sesqui-linear part 
$$R_D(u,v)x=-\{u;v;x\}.$$ 
Thus, in terms of the Bergman curvature tensors we may express \er{3.5} as
$$\<R_\Bl(u',v')x'|y'\>=\<R_D(u,v)x|y\>+\<F''(0)(x,u)|F''(0)(y,v)\>,$$
where $u'=F'(0)u,v'=F'(0)v,x'=F'(0)x,y'=F'(0)y.$ In other words, $F''(0)(x,u)$ is the 'second fundamental form' of the submanifold 
$F(\Bl)\ic D$ at $0=F(0)\in D.$ 
\end{remark}

The main result of this section is the following differential-geometric characterization of Mok embeddings.
\begin{theorem}\label{3.b} Let $D$ be an irreducible hermitian bounded symmetric domain of arbitrary rank. (In particular, the two exceptional domains are admitted.) Let $F:\Bl\to D\ic Z$ be an isometric embedding with $F(0)=0.$ Then $F$ is a Mok embedding 
(see Definition \ref{2.a}) if and only if there exists a unit vector $e\in V$ such that the second fundamental form (cf. the remark \ref{3.a} above) satisfies 
\be{3.28}\dl^0_eF'=F''(0)(e,\cdot)=0.\ee 
\end{theorem}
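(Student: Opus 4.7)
If $F = G_c \circ F'(0)$ with $c$ a minimal tripotent and $F'(0)\colon V \to Z^2 \oplus Z^1$ an isometry, set $e := F'(0)^{-1} c$. Then $e$ is a unit vector and the chain rule together with \eqref{2.5} gives $F''(0)(e, x) = G_c''(0)(c, F'(0) x) = 0$, so \eqref{3.28} holds. For the converse, set $c := F'(0) e$ and $V' := F'(0)(V) \subset Z$; the proof proceeds in three steps.

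\textbf{Step 1: $c$ is a minimal tripotent.} Restricting $F$ to $\{te : t \in \Bl_1\}$ yields a holomorphic isometry $\varphi(t) := F(te)$ with $\varphi(0) = 0$, $\varphi'(0) = c$ and $\varphi''(0) = F''(0)(e, e) = 0$. The kernel identity \eqref{1.3} reads $\lD(\varphi(t), \varphi(t)) = 1 - |t|^2$. Substituting the Faraut--Koranyi expansion \eqref{1.7} and extracting the $t^2\bar t^2$-coefficient: the $-\langle \varphi | \varphi \rangle$-term contributes nothing since $\varphi(t) = tc + O(t^3)$ has no $t^2$-piece, leaving only $C_2 E^{\kappa_2}(c, c) = 0$. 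Hence $\operatorname{rank} c \le 1$; combined with $\|c\| = 1$, $c$ is a minimal tripotent.

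\textbf{Step 2: $V' = Z^2(c) \oplus Z^1(c)$.} Apply the Gauss equation \eqref{3.5} with $u = e$ and $v, x, y$ arbitrary: since $F''(0)(e, x) = 0$ the cross term vanishes, and using the ball triple $\{e; v; x\} = \langle e|v\rangle x + \langle x|v\rangle e$ and the isometry of $F'(0)$ yields
\begin{equation*}
\langle \{c; v'; x'\} | y'\rangle = \langle c|v'\rangle \langle x'|y'\rangle + \langle x'|v'\rangle \langle c|y'\rangle, \quad v', x', y' \in V'.
\end{equation*}
Setting $v' = c$: the $V'$-projection of $D(c, c) x'$ equals $x' + \langle x'|c\rangle c$. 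Decomposing $x' = x_2 + x_1 + x_0$ in the Peirce pieces of $c$, one has $x_2 = \langle x'|c\rangle c \in V'$ and $D(c, c) x' = 2 x_2 + x_1$, so the identity rearranges to $P_{V'}(2 x_2 + x_1) = 2 x_2 + x_1 + x_0$, forcing $x_0 \in {V'}^\perp$; Peirce orthogonality then gives $\|x_0\|^2 = \langle x'|x_0\rangle = 0$, hence $x_0 = 0$. Thus $V' \subset Z^2 \oplus Z^1$, with equality by dimensions ($d = p - 1$). Define $G(z) := G_c(F'(0) z)$: a Mok embedding (Definition~\ref{2.a}) with $G(0) = 0$, $G'(0) = F'(0)$, $G''(0)(e, \cdot) = 0$.

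\textbf{Step 3: $F = G$.} Continuing the expansion of Step~1 with $\varphi(t) = tc + \sum_{n \ge 2} a_n t^n$: since $E^{\kappa_k}(c, \cdot) \equiv 0$ for minimal tripotent $c$ and $k \ge 2$, the coefficient of $t^n \bar t^n$ in $\lD(\varphi(t), \varphi(t)) - (1 - |t|^2) = 0$ reduces to $-\|a_n\|^2 = 0$, giving $a_n = 0$ inductively and hence $F(te) = tc = G(te)$. By Lemma~\ref{3.f} and $F''(0)(e, \cdot) = 0$, $P^2 F'(te) u = \langle u|e\rangle c$ and $P^1 F'(te) u = P^1 F'(0) u$ are independent of $t$, while $P^0 F'(te) u = \sum_{n \ge 2} (t^n/n!) F^{(n+1)}(0)(e^n, u) \in {V'}^\perp$. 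Substituting into the isometry at $z = te, w = se$ with $B(tc, sc)^{-1}$ acting diagonally on the Peirce decomposition (eigenvalues $(1 - t\bar s)^{-2}, (1 - t\bar s)^{-1}, 1$) and comparing against the ball side $(1 - t\bar s)^{-2} \langle u|e\rangle \langle e|y\rangle + (1 - t\bar s)^{-1}(\langle u|y\rangle - \langle u|e\rangle \langle e|y\rangle)$: the negative-power pieces cancel automatically while the constant piece forces $\langle P^0 F'(te) u | P^0 F'(se) y\rangle \equiv 0$, so $F^{(m)}(0)(e^{m-1}, \cdot) = 0$ for all $m \ge 2$, matching $G$'s structure. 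An inductive application of Lemma~\ref{3.d}, combined with these $e$-directional vanishings and Mok's rigidity/rationality result of \cite{Mok-JEMS}, propagates $F^{(n)}(0) = G^{(n)}(0)$ for every $n$ and yields $F \equiv G$. The technical core is this last propagation, where the Gram-matrix equality $F^{(n+1)*} F^{(n+1)} = G^{(n+1)*} G^{(n+1)}$ furnished by Lemma~\ref{3.d} must be upgraded to equality of maps using the $e$-directional rigidity together with the Peirce structure on $V' \subset Z$ from Step~2.
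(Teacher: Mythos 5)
Your only-if direction and Steps 1--2 are correct and essentially reproduce the paper's Lemma \ref{3.e} (the paper extracts the same information from the Gauss equation \eqref{3.5} rather than from the $t^2\bar t^2$-coefficient of the kernel identity, but the content is identical). Your observation in Step 3 that $F^{(m)}(0)(e^{m-1},\cdot)=0$ for all $m\ge 2$ is also correct. The problem is that this controls only the derivatives with at least $m-1$ arguments in the $e$-direction, and these all vanish for the Mok map as well; it says nothing about $F^{(m)}(0)(v_1,\ldots,v_m)$ with all $v_i\in Z^1$, which is where the entire content of $G_c$ sits (recall $G_c''(0)(v,v)=Q_vc\ne 0$). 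The sentence ``an inductive application of Lemma \ref{3.d}, combined with \dots\ Mok's rigidity/rationality result \dots\ propagates $F^{(n)}(0)=G^{(n)}(0)$'' is where the proof actually has to happen, and nothing you have written supplies it. Lemma \ref{3.d} at order $2$ gives only the Gram identity $F''(0)^*F''(0)=G''(0)^*G''(0)$ on $Z^1\odot Z^1\to Z^0$, which determines $F''(0)=W\,G''(0)$ for an arbitrary unitary $W$ of $Z^0$; and \cite{Mok-JEMS} is an extension theorem for germs, not a uniqueness statement for $2$-jets, so it cannot be cited to close this gap (indeed Section 4 of the paper exhibits non-Mok isometries into the Lie ball, so no blanket rigidity is available).

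What is missing is precisely the technical core of the paper: one must show that $W$ is a scalar, i.e.\ $\dl^0_{v,v}F=\lt\,Q_vc$ for a single constant $\lt\in\Tl$ and all $v\in Z^1$. The paper does this by first proving (Lemmas \ref{3.g}, \ref{3.h}) that $F(v)=v+\dl^0_{v,v}F$ on $\Bl\cap Z^1$, then comparing the bidegree-$(3,\bar3)$ terms of $\lD(F(v),F(v))=1-\<v|v\>$ via the Faraut--Koranyi expansion \eqref{1.7} to obtain the identities \eqref{3.15}--\eqref{3.16} (vanishing of the polarization $\t p(v,\dl^0_{v,v}F)$ for every $p\in\PL_{1,1}(Z)$), and finally running a case-by-case grid-theoretic argument (Lemma \ref{3.j}) over symmetric, rectangular and antisymmetric matrices, spin factors, and the two exceptional triples to conclude that the ratio $r(v)$ with $\dl^0_{v,v}F=r(v)Q_vc$ is a unimodular constant. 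Only after this does the induction you allude to become available (Lemma \ref{3.c}): once $F$ and $G_\lt$ agree to order $2$, the Gram identities of Lemma \ref{3.d} combined with the surjectivity of $G''_c(0):Z^1\odot Z^1\to Z^0$ force agreement at every order. Note also that the correct conclusion is $F=G_\lt$, i.e.\ $F(z)=\o\lt\,G(\lt z)$ for some phase $\lt\in\Tl$ (still a Mok embedding), not $F\equiv G$ on the nose as you claim.
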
 

It follows easily from \er{2.5} that \er{3.28} holds for any Mok embedding. The proof of the converse direction will be given in several steps.

\begin{lemma} \label{3.e} Let $F:\Bl\to D$ be a Bergman isometry and $F(0)=0$. Let $e\in V$ be a unit vector. Then
\be{3.7}\dl^0_{e,e}F=0\mbox{ if and only if }c:=\dl^0_eF\mbox{ is a minimal tripotent}.\ee
\be{3.8}\dl^0_eF'=F''(0)(e, \cdot)=0\mbox{ if and only if, in addition, } F'(0)V=Z^2_c\op Z^1_c.\ee
\end{lemma}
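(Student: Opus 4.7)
The plan is to apply the polarized identity \er{3.5} twice, with different choices of vectors, to obtain conditions first on $c:=\dl^0_eF=F'(0)e$ and then on $F''(0)(e,\cdot)$. Throughout I will use that the isometry condition \er{1.1} at $z=w=0$, where the Bergman operator reduces to the identity, makes $F'(0):V\to Z$ isometric for the $K$-invariant inner product; in particular $\|c\|=1$.

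For \er{3.7} I would substitute $u=v=x=y=e$ into \er{3.5}. Since the triple product on the ball $V=\Cl^d$ satisfies $\{e;e;e\}=2e$, the identity collapses to
$$2=\|\dl^0_{e,e}F\|^2+\<\{c;c;c\}|c\>=\|\dl^0_{e,e}F\|^2+2\<Q_cc|c\>.$$
The key estimate is that $\<Q_cc|c\>\le 1$ for any unit vector $c\in Z$, with equality exactly when $c$ is a minimal tripotent. To see this, take the spectral decomposition $c=\S_i\ll_ic_i$ along an orthonormal frame of minimal tripotents, with $\ll_i>0$ and $\S_i\ll_i^2=1$; pairwise orthogonality of the frame kills all mixed terms in $\{c;c;c\}$ and leaves only the diagonal contribution $2\S_i\ll_i^3 c_i$, whence $\<Q_cc|c\>=\S_i\ll_i^4\le(\max_i\ll_i^2)\S_i\ll_i^2\le 1$, with equality iff exactly one $\ll_i$ equals $1$. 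Combined with the displayed identity this yields both directions of \er{3.7}.

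Assuming now that $c$ is a minimal tripotent, I would prove \er{3.8} by applying \er{3.5} to $u=v=e$ and arbitrary $x,y\in V$. Decompose $F'(0)x=x_2+x_1+x_0$ and $F'(0)y=y_2+y_1+y_0$ along the Peirce spaces $Z^\ll_c$. Since $Z^2_c=\Cl c$ and $F'(0)e=c$, the isometry of $F'(0)$ gives $x_2=\<x|e\>c$, and since $D(c,c)$ acts as $\ll$ on $Z^\ll_c$ one has $\{c;c;F'(0)x\}=2x_2+x_1$. Substituting into \er{3.5} and using orthogonality of the Peirce spaces, the $Z^2_c$- and $Z^1_c$-terms cancel and there remains
$$\<F''(0)(e,x)|F''(0)(e,y)\>=\<x_0|y_0\>.$$
Hence $F''(0)(e,\cdot)$ vanishes identically iff $x_0=0$ for every $x\in V$, i.e.\ $F'(0)V\ic Z^2_c\op Z^1_c$; equality then follows from dimension counting, since $F$ and $D$ share the genus $p$ and so $d=p-1=\dim(Z^2_c\op Z^1_c)$.

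The only substantive step is the spectral/convexity inequality $\S_i\ll_i^4\le 1$ that pins down $c$ as a minimal tripotent; the rest is routine bookkeeping with the Peirce decomposition and the polarized identity.
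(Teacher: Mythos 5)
Your proof is correct and follows essentially the same route as the paper: both parts rest on the polarized identity \eqref{3.5}, the spectral decomposition $c=\sum_i\lambda_ic_i$ with $\sum_i\lambda_i^4\le 1$ (equality iff $c$ is a minimal tripotent) for \eqref{3.7}, and the Peirce decomposition of $F'(0)V$ relative to $c$ plus the genus/dimension count for \eqref{3.8}. The only cosmetic difference is that you polarize in arbitrary $x,y$ to get $\langle F''(0)(e,x)|F''(0)(e,y)\rangle=\langle x_0|y_0\rangle$, whereas the paper sets $x=y=v$ and treats $v=e$ and $v\perp e$ separately.
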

\begin{proof} Write $c:=\dl^0_eF=\S_i\ll_i e_i$ for minimal orthogonal tripotents $e_i\in Z,$ with $\ll_i>0.$ Since $F'(0)$ is an isometry, we have $\S_i\ll_i^2=1.$ As a special case of \er{3.5} we have
$$\f12\|\dl^0_{e,e}F\|^2=\<Q_ee|e\>-\<Q_{\dl^0_eF}(\dl^0_eF)|\dl^0_eF\>=1-\<Q_cc|c\>=1-\S_i\ll_i^4.$$
Thus $\dl^0_{e,e}F=0$ if and only if $\S_i\ll_i^4=1.$ Hence $\S_i\ll_i^2(1-\ll_i^2)=0,$ showing that all $\ll_i=1,0.$
Since $\S_i\ll_i^2=1,$ only one term $\ll_i=1.$ Thus $c$ is a minimal tripotent. This proves the first assertion. By \er{3.5} we have
\be{3.9}\|\dl^0_{e,v}F\|^2=\<\{e;e;v\}|v\>-\<\{c;c;\dl^0_vF\}|\dl^0_vF\>\ee
for all $v\in V.$ Thus $\dl^0_{e,v}F=0$ if and only if
\be{3.10}\<\{c;c;\dl^0_vF\}|\dl^0_vF\>=\<\{e;e;v\}|v\>.\ee
For $v=e$ this was shown above. Now let $v\in e^\perp.$ Then $\<\dl^0_vF|c\>=\<\dl^0_vF|\dl^0_eF\>=\<v|e\>=0$ since $F'(0)$ is isometric. Thus $\dl^0_vF\in c^\perp=Z^1\op Z^0.$ Write $\dl^0_vF=w_1+w_0.$ Then
$$\<w_1|w_1\>+\<w_0|w_0\>=\|\dl^0_vF\|^2=\|v\|^2=\<\{e;e;v\}|v\>.$$
Since $\<\{c;c;\dl^0_vF\}|\dl^0_vF\>=\<\{c;c;w_1\}|w_1\>=\<w_1|w_1\>,$ the condition \er{3.10} is equivalent to $w_0=0$, namely
$\dl_v^0F\in Z^1$, and finally $F'(0)V=Z_c^2\op Z_c^1$.
\end{proof}

For the rest of this section we suppose $F:\Bl\to D$ is an isometric embedding satisfying \er{3.8} and $d=\dim(Z_c^2\op Z_c^1)$ for some (and hence any) minimal tripotent $c.$ Then $F'(0)$ is an isometry from $V$ into $Z^2_c\op Z^1_c,$ which is surjective since the dimensions agree. Hence $F$ and the Mok embedding $G_c$ (cf \er{2.4}) agree up to order 1 at $0.$ Lemma \ref{3.d} implies
\be{3.11}\<\dl^0_{u,x}F|\dl^0_{v,y}F\>=\<\dl^0_{u,x}G_c|\dl^0_{v,y}G_c\>\ee
for all $u,x,v,y\in V.$ Thus we may identify $V\al Z^2\op Z^1,$ so that $\Bl$ becomes the unit ball of $Z^2\op Z^1,$ and 
$F'(0)$ is the inclusion map. With Lemma \ref{3.f} we have
\be{3.13}F(z)=z+H(z)\ee
for all $z\in\Bl\ic Z^2\op Z^1,$ with $H(z)\in Z^0.$ This implies $\dl^0_{x,y}F\in Z^0$ for $x,y\in Z^1.$ 

\begin{lemma}\label{3.g} $Z^0$ is spanned by $\dl^0_{x,y}F$ for $x,y\in Z^1.$
\end{lemma}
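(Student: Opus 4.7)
The strategy is to compare $F$ with the Mok embedding $G_c$, which has the same $1$-jet at the origin under the identification $V\cong Z^2\oplus Z^1$, and to use the Gram matrix identity \er{3.11} together with a Jordan-theoretic surjectivity statement for $G_c''(0)$. The paragraph preceding the lemma already gives $\dl^0_{x,y}F\in Z^0$ for $x,y\in Z^1$, so the span in question lies in $Z^0$ and it suffices to show that it has dimension $\dim Z^0$.

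First, restricting \er{3.11} to $u,v,x,y\in Z^1$ yields
\[
\<\dl^0_{u,x}F\,|\,\dl^0_{v,y}F\>=\<\dl^0_{u,x}G_c\,|\,\dl^0_{v,y}G_c\>,
\]
so the Gram matrices of the two families $\{\dl^0_{x,y}F\}_{x,y\in Z^1}$ and $\{\dl^0_{x,y}G_c\}_{x,y\in Z^1}$ in $Z^0$ coincide, and the subspaces of $Z^0$ they span have the same dimension. It therefore suffices to show that the second family already spans all of $Z^0$.

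Next, from \er{2.5} we have $\dl^0_{v,v}G_c=Q_v c=\tfrac12\{v;c;v\}$ for $v\in Z^1$, and polarization gives
\[
\dl^0_{x,y}G_c=\tfrac12\{x;c;y\},\qquad x,y\in Z^1.
\]
The lemma is therefore reduced to the Jordan-theoretic claim
\[
Z^0=\mathrm{span}\{\{x;c;y\}:x,y\in Z^1\}.
\]
The right-hand side is invariant under the isotropy subgroup $K_c\subset K$ of $c$. For a minimal tripotent $c$ in an irreducible hermitian Jordan triple, $K_c$ acts irreducibly on $Z^0$, so the span is either trivial or all of $Z^0$; and it is not trivial since one finds $v\in Z^1$ with $Q_v c\neq 0$ whenever $Z^0\neq 0$ (for instance, in the matrix case $v=e_{1j}+e_{i1}$ with $i,j>1$ gives $Q_v c=e_{ij}$, and the classical symmetric, antisymmetric and spin cases are handled analogously).

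The main obstacle is this last Jordan-theoretic step: verifying $K_c$-irreducibility of $Z^0$, which is transparent for the classical domains from the explicit form of $K_c$, but for the two exceptional $16$- and $27$-dimensional domains requires either invoking a general structural result on Peirce decompositions in irreducible hermitian Jordan triples, or a direct computation in the exceptional Jordan triple.
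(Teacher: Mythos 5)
Your argument is correct and follows essentially the same route as the paper: equation \eqref{3.11} gives $F''(0)^*F''(0)=G_c''(0)^*G_c''(0)$ on $Z^1\odot Z^1$, so the two images in $Z^0$ have equal dimension, and everything reduces to the surjectivity of $G_c''(0)$, i.e.\ to $\{Z^1;c;Z^1\}=Z^0$. The paper simply asserts this last surjectivity, whereas you sketch a justification via $K_c$-irreducibility of $Z^0$ (fully explicit only in the classical cases), so your write-up is, if anything, slightly more detailed than the paper's at the one point both arguments rely on.
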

\begin{proof} Regarding $F''(0)$ as a linear map from $Z^1\odot Z^1$ into $Z^0,$ \er{3.11} can be expressed as
$$F''(0)^*F''(0)=G''_c(0)^*G''_c(0).$$
Since $G_c''(0):Z^1\odot Z^1\to Z^0$ is surjective, it follows that
$$\dim Z^0=\mbox{rank}\ G_c''(0)^*G_c''(0)=\mbox{rank}\ F''(0)^*F''(0).$$
Therefore $F''(0):Z^1\odot Z^1\to Z^0$ is also surjective. 
\end{proof}

\begin{lemma}\label{3.h} Restricted to $B\ui Z^1$ we have $F(v)=v+\dl^0_{v,v}F$ for all $v\in Z^1.$ 
\end{lemma}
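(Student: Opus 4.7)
By \er{3.13} we have $F(v)=v+H(v)$ with $H(v)\in Z^0$ for $v\in\Bl\cap Z^1$. Expanding $H$ in Taylor series as $H(v)=\S_{n\ge 2}H_n(v)$ with $H_n:Z^1\to Z^0$ homogeneous of degree $n$ (so $H_2(v)=\dl^0_{v,v}F$), the lemma is equivalent to the vanishing $H_n\equiv 0$ for every $n\ge 3$.

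The comparison target is the Mok embedding $G_c$ of Section~2: by \er{2.4}, $G_c(v)=v+Q_v c$ for $v\in Z^1$, so every Taylor coefficient $\dl^0_{v_1,\ldots,v_n}G_c$ with $v_i\in Z^1$ vanishes for $n\ge 3$. Both $F$ and $G_c$ are isometric, fix the origin, and satisfy $F'(0)=G_c'(0)=\li$, i.e.\ they agree up to order $1$ on all of $V$. I proceed by induction on $k\ge 1$, seeking $h_k\in K$ with $h_k|_{Z^2\op Z^1}=\mbox{id}$ such that $F_k:=h_k\oc F$ agrees with $G_c$ up to order $k$ on $V$. The base case $k=1$ is immediate. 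For the inductive step, Lemma~\ref{3.d} applied to the pair $(F_k,G_c)$ gives the Gram identity $(F_k)^{(k+1)}(0)^*(F_k)^{(k+1)}(0)=G_c^{(k+1)}(0)^*G_c^{(k+1)}(0)$, so the two $(k+1)$-jets differ by a unitary on their common image space $Z^0$; absorbing this unitary by an additional element of $K$ fixing $Z^2\op Z^1$ yields $h_{k+1}$, hence $F_{k+1}$. With the induction in place, Lemma~\ref{3.d} applied to $F_k$ with $m=n=k$ gives
$$\|\dl^0_{v_0,\ldots,v_k}F_k\|^2=\|\dl^0_{v_0,\ldots,v_k}G_c\|^2=0\qquad(v_i\in Z^1,\;k\ge 2),$$
so $H_{k+1}^{F_k}\equiv 0$; since the $h_k$ are linear and invertible on $Z^0$ and $F_k(v)=v+h_k(H(v))$ for $v\in Z^1$, this lifts to $H_{k+1}^F\equiv 0$ as required.

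The principal obstacle is the $K$-absorption step: at each order one must realize the unitary relating the $(k+1)$-jets of $F_k$ and $G_c$ on $Z^0$ as the restriction of an element of $K$ fixing $Z^2\op Z^1$ pointwise. This is where the Jordan-theoretic description of $K$ becomes essential: the stabilizer of the minimal tripotent $c$ in $K$ acts on $Z^0$ as its full triple-automorphism group, and the Peirce-graded structure of the isometric identity $\lD(F(v),F(w))=1-\<v|w\>$ (obtained by polarizing \er{1.3}, since both sides are holomorphic in $v$ and antiholomorphic in $w$ and agree on the diagonal) forces the discrepancy unitary to lie in the image of this stabilizer. A more computational alternative bypasses the $K$-absorption altogether by expanding the polarized identity directly via the Faraut--Koranyi formula \er{1.7} and matching coefficients bidegree-by-bidegree in $(v,\o w)$: the Peirce orthogonality $\<Z^0|Z^1\>=0$ eliminates all contributions of bidegree $(n,1)$, and the explicit quadratic Mok solution $Q_v c$ serves as the unique seed from which the vanishing $H_n\equiv 0$ for $n\ge 3$ propagates inductively.
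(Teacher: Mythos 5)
There is a genuine gap, and it sits at the heart of your induction. Your strategy requires, already at the first step $k=1\to k=2$, that the unitary $U$ of $Z^0$ relating $F''(0)|_{Z^1\odot Z^1}$ to $G_c''(0)|_{Z^1\odot Z^1}=Q_{(\cdot)}c$ (which the Gram identity of Lemma \ref{3.d} does produce) be realized by an element of $K$ fixing $Z^2\op Z^1$ pointwise. This is impossible in general: for $Z=\Cl^{r\xx s}$ with $c=E_{11}$, an element $z\mapsto uzv^*$ of $K$ fixing the first row and first column pointwise forces $u,v$ to be the same scalar multiple of the identity, so it acts \emph{trivially} on $Z^0$; no nontrivial unitary of $Z^0$ can be absorbed this way. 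Even relaxing to reparametrizations $k\oc F\oc U$ with $k\in K$, $U\in U(d)$, one can only absorb a scalar $\lt\in\Tl$ as in \er{2.6}, whereas the Gram identity a priori allows an arbitrary unitary of $Z^0$. Pinning that discrepancy down to a scalar is exactly the content of Lemma \ref{3.j}, the hardest step of Section 3, which is proved \emph{after} the present lemma and whose proof (via Lemma \ref{3.i}) uses the present lemma. So your route is both unjustified at its key step and circular relative to the logical order of the section. The fallback of replacing $G_c$ by $U\oc G_c$ does not help either, since $U\oc G_c$ need not be a Bergman isometry into $D$, so Lemma \ref{3.d} no longer applies to the pair.

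The essential point you are missing is that no comparison with $G_c$ beyond order $1$ is needed. The paper differentiates the isometry identity \er{1.1} itself to obtain, for $x,y,v,v_1,\ldots,v_n\in Z^1$ and $n\ge2$,
$$\<\dl^0_{x,y}F|\dl^0_{v,v_N}F\>=\S_{J\ne\OO}\<\{\dl^0_y F;\dl^0_{v_J}F;\dl^0_x F\}|\dl^0_{v,v_{N\sm J}}F\>,$$
and then checks term by term, using the Peirce multiplication rules and the fact (from Lemma \ref{3.f}) that all derivatives of order $\ge2$ lie in $Z^0$, that each summand pairs vectors lying in mutually orthogonal Peirce spaces ($Z^2$ against $Z^1$ or $Z^0$, $Z^1$ against $Z^0$), hence vanishes. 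Since by Lemma \ref{3.g} the vectors $\dl^0_{x,y}F$ span $Z^0$ (this uses only the \emph{rank} of $F''(0)^*F''(0)=G_c''(0)^*G_c''(0)$, not an identification of the maps), and $\dl^0_{v,v_N}F\in Z^0$, one concludes $\dl^0_{v,v_N}F=0$ directly. Your closing ``computational alternative'' via \er{1.7} and Peirce bidegree bookkeeping points in roughly this direction, but as written it is only a declaration that the vanishing ``propagates inductively from the seed $Q_vc$''; the actual mechanism (the orthogonality computation above plus the spanning statement) is not supplied, so it does not constitute a proof either.
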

\begin{proof} Let $n\ge 2$ and $v,v_1,\ldots,v_n\in Z^1.$ By \er{3.4} we have
$$\<\dl^0_{x,y}F|\dl^0_{v,v_N}F\>=\S_{J\ne\OO}\<\{\dl^0_y F;\dl^0_{v_J}F;\dl^0_x F\}|\dl^0_{v,v_{N\sm J}}F\>
=\S_{J\ne\OO}\<\{y;\dl^0_{v_J}F;x\}|\dl^0_{v,v_{N\sm J}}F\>$$
for $x,y\in Z^1$ and $N=\{1,\ldots,n\}.$ If $|J|\ge 2,$ then $\dl^0_{v_J}F\in Z^0$ and hence $\{y;\dl^0_{v_J}F;x\}\in Z^2.$ If $J=N,$ then $\dl^0_{v,v_{N\sm J}}F=\dl^0_v F=v\in Z^1.$ If $J\ne N,$ then $\dl^0_{v,v_{N\sm J}}F\in Z^0.$ If $|J|=1,$ then 
$\{\dl^0_y F;\dl^0_{v_J}F;\dl^0_x F\}=\{y;v_J;x\}\in Z^1$ and $\dl^0_{v,v_{N\sm J}}F\in Z^0.$ In all cases, we have orthogonality
\be{3.12}\<\dl^0_{x,y}F)|\dl^0_{v,v_N}F\>=0.\ee
Since $\dl^0_{x,y}F$ spans all of $Z^0$ by Lemma \ref{3.g}, it follows that $\dl^0_{v,v_N}F=0.$ Thus $F^{(n+1)}(0)(v,\cdots,v)=0$ for all
$v\in Z^1,$ proving our claim.
\end{proof}

A 2-homogeneous polynomial $p$ on Z has a polar form
\be{3.14}\t p(x,y):=p(x+y)-p(x)-p(y)\ee
which is bilinear and symmetric in $x,y\in Z.$

\begin{lemma}\label{3.i} Let $v\in Z^1.$ Then
\be{3.15}\<v|v\>\<\dl^0_{v,v}F|\dl^0_{v,v}F\>=\<\{v;\dl^0_{v,v}F;\dl^0_{v,v}F\}|v\>\ee
and
\be{3.16}\t p(v,\dl^0_{v,v}F)=0.\ee
for any polynomial $p\in\PL_{1,1}(Z).$
\end{lemma}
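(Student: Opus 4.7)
The plan is to exploit the Bergman-kernel form \er{1.3} of the isometric condition on $\Bl\ui Z^1$. By Lemma \ref{3.h} we have $F(v)=v+w$ on $\Bl\ui Z^1$, where $w=\dl^0_{v,v}F\in Z^0$ is a homogeneous quadratic polynomial in $v$. Peirce orthogonality yields $\<v|w\>=0$, so combining \er{1.3} with the expansion \er{1.7} of $\lD$ gives the polynomial identity
$$\|w\|^2=\S_{k=2}^r(-1)^kC_k\,E^{(1^k)}(v+w,v+w).$$
Both sides are bihomogeneous polynomials in $(v,\o v)$. Substituting $v\mapsto tv$ (whence $w\mapsto t^2w$) and matching coefficients in each bidegree $(t^a,\o t^b)$ then produces a family of polynomial identities from which \er{3.15} and \er{3.16} will be read off.

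For \er{3.16} I would use the low-order coefficients. The bidegrees $(2,2)$, $(2,4)$ and $(4,2)$ give $\|w\|^2=C_2 E^{(1,1)}(v,v)$, $E^{(1,1)}(v,w)=0$ and its conjugate $E^{(1,1)}(w,v)=0$. Extending \er{1.3} sesqui-holomorphically to $\lD(F(z),F(z'))=1-\<z|z'\>$ for $z,z'\in Z^1$ and polarizing in both arguments gives
$$C_2\,\t E^{(1,1)}(u_1,u_2;\o z_1,\o z_2)=\<P(u_1,u_2)|P(z_1,z_2)\>,\quad u_i,z_j\in Z^1,$$
where $P$ is the symmetric bilinear polar form of the map $v\mapsto w$ and $\t E^{(1,1)}$ denotes the reproducing-kernel polar in both the $z$- and the $\lz$-slots. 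Specializing $z_1=z_2=v$, $u_1=v$ and varying $u_2\in Z^1$, then using the $K$-irreducibility of $\PL_{1,1}(Z)$ to extend the $\lz$-slot from $Z^1$ to all of $Z$, yields $\t E^{(1,1)}(v,w;\o\lz)=0$ for every $\lz\in Z$. By the reproducing property this is equivalent to $\t p(v,w)=0$ for every $p\in\PL_{1,1}(Z)$, which is \er{3.16}.

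The identity \er{3.15} is read off from the bidegree $(3,3)$-coefficient. There the expansion produces $4C_2\,\t E^{(1,1)}(v,w;\o v,\o w)$ from the $E^{(1,1)}$-term and $-C_3 E^{(1,1,1)}(v,v)$ from the rank-$(1,1,1)$ term, with further contributions for $r\ge 4$. After using \er{3.16} to simplify the cross contributions, the residual reproducing-kernel expression is identified with the Jordan-triple quantity $\<\{v;w;w\}|v\>$, using the Peirce rule $\{Z^1;Z^0;Z^0\}\ic Z^1$ which guarantees $\{v;w;w\}\in Z^1$ and makes the pairing with $v$ well-defined. The main obstacle is exactly this identification at bidegree $(3,3)$: for tube domains of rank $r\le 2$ only the $E^{(1,1)}$-term contributes and the Jordan-triple interpretation is immediate, but for higher-rank or non-tube domains, including the exceptional domains of dimensions 16 and 27, the $E^{(1^k)}$-contributions for $k\ge 3$ must be carefully collapsed to the triple product form, which is where the Peirce decomposition and the $K$-equivariance of $\PL_\m(Z)$ become indispensable.
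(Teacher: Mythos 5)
Your setup coincides with the paper's: restrict to $\Bl\ui Z^1$, write $F(v)=v+w$ with $w=\dl^0_{v,v}F\in Z^0$ via Lemma \ref{3.h}, use Peirce orthogonality to reduce \er{1.3} to $\<w|w\>=\S_{k=2}^r(-1)^kC_kE^{\k}(v+w,v+w)$, and compare bihomogeneous components. But the execution has two genuine gaps. First, your route to \er{3.16} does not close. The double-polarized identity $C_2\,\t E^{(1,1)}(u_1,u_2;\o z_1,\o z_2)=\<P(u_1,u_2)|P(z_1,z_2)\>$ is a correct consequence of the isometry, but it only controls the polar forms $\t p_{2,\la}(u_1,u_2)$ with \emph{both} arguments in $Z^1$, whereas \er{3.16} concerns $\t p(v,w)$ with $w\in Z^0$; no appeal to $K$-irreducibility bridges $Z^1\xx Z^1$ to $Z^1\xx Z^0$ (indeed $\{E^{(1,1)}_z:\ z\in Z^1\}$ does not span $\PL_{1,1}(Z)$ in general, e.g.\ for $\Cl^{r\xx s}$ with $r\ge 3$ the minors supported off the first row and column vanish on $Z^1$). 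Likewise the bidegree $(4,2)$ identity $E^{(1,1)}(w,v)=0$ is a different and weaker statement than $\t p(v,w)=0$. The actual source of \er{3.16} is the bidegree $(3,3)$ coefficient that you reserve for \er{3.15}: writing $E^{\k}(x,y)=\S_\la p_{k,\la}(x)\,\o{p_{k,\la}(y)}$ for an orthonormal basis of $\PL_\k(Z)$, the $(3,3)$ part of the right-hand side is exactly $C_2\S_\la|\t p_{2,\la}(v,w)|^2$, while the left-hand side $\<w|w\>$ has bidegree $(2,2)$; positivity then forces every $\t p_{2,\la}(v,w)=0$.

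Second, the "further contributions for $r\ge 4$" that you single out as the main obstacle do not exist, and the term $-C_3E^{(1,1,1)}(v,v)$ you list vanishes identically: since $v\in Z^1$ has rank $\le 2$ one has $p_{k,\la}(v)=0$ for $k\ge 3$, and the remaining components $p^i_{k,\la}(v,w)$ of bidegree $(k-i,i)$ in $(v,w)$ have $v$-degree $k+i\ge 4$ once $i\ge 1$, so only $k=2$ survives at $(3,3)$. The genuine obstacle is the one you name but do not resolve, namely identifying the $(3,3)$ coefficient with a Jordan-triple quantity. This requires the explicit formula $E^{(1,1)}(x,y)=\f1{2+a}\(\<x|y\>^2-\<Q_xy|y\>\)$ from \cite[p.~3030,(5)]{L2}: expanding it at $x=y=v+w$ with the Peirce rules shows that $C_2$ times the $(3,3)$ part of $E^{(1,1)}(v+w,v+w)$ equals $\<v|v\>\<w|w\>-\<\{v;w;w\}|v\>$, which together with the sum-of-squares expression yields \er{3.15} directly and \er{3.16} by positivity. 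Without this input, neither identity is reachable by bidegree bookkeeping alone, and your proposed order of deduction (proving \er{3.15} \emph{from} \er{3.16}) would in any case only return $0=0$ at bidegree $(3,3)$ unless the Loos formula is invoked.
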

\begin{proof} For $0\le k\le r$ the Fischer-Fock kernel for the partition $\k:=(1,\ldots,1,0,\ldots,0)$ can be written as
$$E^\k(x,y)=\S_\la p_{k,\la}(x)\ \o{p_{k,\la}(y)}$$
where $p_{k,\la}$ is an orthonormal basis of the associated Fischer-Fock space $\PL_\k(Z).$ Write
$$p_{k,\la}(v+w)=\S_{i=0}^k p_{k,\la}^i(v,w)$$
for homogeneous polynomials $p_{k,\la}^i(v,w)$ of bidegree $(k-i,i).$ Then $\t p_{2,\la}(v,w)=p_{2,\la}^1(v,w).$ By 
\cite[p. 3030,(5)]{L2} we have 
$$\f1{2+a}\(\<x|y\>^2-\<Q_xy|y\>\)=E^{1,1}(x,y)=\S_\la p_{2,\la}(x)\ \o{p_{2,\la}(y)},$$
and  the term $C_2E^{1,1}$ in (\ref{1.7}) is
\begin{equation*}
C_2E^{1,1}(x,y)
=\frac 12\(\<x|y\>^2-\<Q_xy|y\>\)
\end{equation*}
since $C_2=1+\frac a2=\frac{2+a}2$. For $v\in Z^1,w\in Z^0$ we obtain 
$Q_{v+w}(v+w)=Q_vv+\{v;w;w\}+\{v;v;w\}+Q_ww+Q_vw$ since $Q_wv=0$ by the Peirce rules.
 Hence $\<Q_{v+w}(v+w)|v+w\>=\<Q_v|v\>+\<\{v;w;w\}|v\>+\<\{v;v;w\}|w\>+\<Q_ww|w\>=\<Q_vv|v\>+2\<\{v;w;w\}|v\>+\<Q_ww|w\>,$ since $Q_vw\in Z^2$ and $\<\{v;v;w\}|w\>=\<v|\{v;w;w\}\>
=\<v|\{w;w;v\}\>=\<\{v;w;w\}|v\>.$ It follows that
\begin{equation}
\label{c-2-e-2}
\begin{split}
&\qquad 2C_2E^{1,1}(v+w, v+w)\\
&=\<v+w|v+w\>^2-\<Q_{v+w}(v+w)|v+w\>\\
&=\<v+w|v+w\>^2-\<Q_vv|v\>-2\<\{v;w;w\}|v\>-\<Q_ww|w\>\\
&=\(\<v|v\>+\<w|w\>\)^2-\<Q_vv|v\>-2\<\{v;w;w\}|v\>-\<Q_ww|w\>\\
&=\<v|v\>^2+\<w|w\>^2+2\<v|v\>\<w|w\>-\<Q_vv|v\>-2\<\{v;w;w\}|v\>-\<Q_ww|w\>.
\end{split}
\end{equation}
Comparing terms of bidegree $(1,1)$ in $v$ and in $w$ yield
\be{3.18}
2(\<v|v\>\<w|w\>-\<\{v;w;w\}|v\>)=(a+2)\S_\la\t p_{2,\la}(v,w)\ \o{\t p_{2,\la}(v,w)}.\ee
Now let $k\ge 3.$ Since $v\in Z^1$ has rank $\le 2$ we have $p_{k,\la}(v)=0$ which implies that the term $p_{k,\la}^0(v,w)$ independent of $w$ vanishes. Applying Lemma \ref{3.h} to $v\in B\ui Z^1$ and putting $w:=\dl^0_{v,v}F\in Z^0,$ we obtain from \er{3.13} and \er{1.7}  
that
\begin{equation*}
\begin{split}
1-\<v|v\>&=\lD(F(v),F(v))=1-\<v+w|v+w\>+\S_{k=2}^r(-1)^k\ C_k\ E^\k(v+w,v+w)\\
&=1-\<v|v\>-\<w|w\>+\S_{k=2}^r(-1)^k\ C_k\ E^\k(v+w,v+w).
\end{split}
\end{equation*}
Hence, solving for $\<w|w\>$ and using \er{c-2-e-2}, we obtain
$$\<w|w\>=\S_{k=2}^r(-1)^k\ C_k\ E^\k(v+w,v+w)=\f12\(\<v|v\>^2+\<w|w\>^2-\<Q_vv|v\>-\<Q_ww|w\>\)$$
$$+\<v|v\>\<w|w\>-\<\{v;w;w\}|v\>+\S_{k=3}^r(-1)^k\ C_k\S_\la\S_{i,j}p_{k,\la}^i(v,w)\ \o{p_{k,\la}^j(v,w)}.$$
Comparing terms of homogeneity $(3,\o 3)$ in $v$ implies $\<v|v\>\<w|w\>-\<\{v;w;w\}|v\>=0,$ since for $k\ge 3$ $p_{k,\la}^i(v,w)$ has degree $(k-i)+2i>3$ as a polynomial in $v.$ This proves \er{3.15}. Using \er{3.18} we obtain $\t p_{2,\la}(v,w)=0$ for all 
$\la,$ and \er{3.16} follows by linearity. 
\end{proof} 

The Mok type embedding $G=G_c$ has the second derivative
$$\dl^0_{v,v}G=Q_v c=\f12\{v;c;v\}$$
for all $v\in Z^1.$ The crucial step in proving Theorem \ref{3.b} is the following
\begin{lemma}\label{3.j} Let $Z$ be a hermitian Jordan triple. Then there exists a constant $\lt\in\Tl$ such that 
$$\dl^0_{v,v}F=\lt\ Q_vc$$
for all $v\in Z^1.$
\end{lemma}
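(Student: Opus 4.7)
The strategy is to (i) use Lemma \ref{3.d} to realize $W(v):=\dl^0_{v,v}F$ as a unitary transform of the Mok-type second derivative $Q_vc$, and (ii) exploit the cubic identity \er{3.15} together with the Peirce calculus to force that unitary to be a scalar of modulus one.

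\textbf{Step 1 (extract a unitary $T$ on $Z^0$).} Both $F$ and the Mok embedding $G_c$ are Bergman isometries agreeing to first order at the origin (both with derivative the inclusion $V=Z^2\op Z^1\hookrightarrow Z$), and moreover $F''(0)(c,\cdot)=0=G_c''(0)(c,\cdot)$ by \er{3.28} and \er{2.5}. Lemma \ref{3.d} with $n=1$ therefore yields the Gram-matrix equality
$$\<F''(0)(x,y)|F''(0)(x',y')\>=\<G_c''(0)(x,y)|G_c''(0)(x',y')\>,\qquad x,y,x',y'\in V,$$
which together with the surjectivity of $G_c''(0)|_{Z^1\odot Z^1}$ onto $Z^0$ (Lemma \ref{3.g} applied to $G_c$) produces a unique unitary operator $T\colon Z^0\to Z^0$ such that
$$W(v)=T(Q_vc),\qquad v\in Z^1.$$

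\textbf{Step 2 (cubic constraint on $T$).} Substituting $W(v)=T(Q_vc)$ into \er{3.15}, using $\|T(Q_vc)\|=\|Q_vc\|$ and the fact that the Mok embedding itself satisfies \er{3.15} (being a Bergman isometry), we obtain
$$\<\{v;T(Q_vc);T(Q_vc)\}|v\>=\<\{v;Q_vc;Q_vc\}|v\>,\qquad v\in Z^1.\qquad(\star)$$

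\textbf{Step 3 (polarize and conclude scalarity).} Both sides of $(\star)$ are bi-homogeneous of bidegree $(3,3)$ in $v$. Polarizing in $v$ (substituting $v\mapsto\sum_is_iv_i$, separating bidegrees, and using the symmetry of $\{u;v;w\}$ in the outer slots) together with the fact that $\{Q_vc:v\in Z^1\}$ linearly spans $Z^0$ (Lemma \ref{3.g}), we extract the bilinear identity
$$\{v_1;T(A);T(B)\}=\{v_1;A;B\},\qquad v_1\in Z^1,\ A,B\in Z^0.$$
Since the triple product is $\mathbb C$-linear in its outer slots and conjugate-linear in its middle slot, the scalar ansatz $T=\tau I$ satisfies this identity exactly when $|\tau|^2=1$; conversely, the rigidity of the identity combined with the irreducibility of the natural $K_c$-action on $Z^0$ (which is itself an irreducible hermitian Jordan subtriple when non-zero, by the standard Peirce-theoretic analysis uniform across all simple hermitian Jordan triples) forces $T=\tau\cdot I$ via Schur's lemma. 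Unitarity then gives $\tau\in\Tl$; the case $Z^0=0$ is trivial.

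\textbf{Main obstacle.} Step 3 is the technical heart: the polarization of $(\star)$ must be carried out carefully (tracking holomorphic vs antiholomorphic variables and the sesquilinearity of the triple product), and the deduction that the resulting bilinear identity forces $T$ to be scalar must be made uniform in $Z$, including the exceptional $16$- and $27$-dimensional domains where the Peirce component $Z^0$ is itself of exceptional type. This is precisely where the Jordan-theoretic framework is indispensable, as it provides a uniform Peirce calculus across all cases.
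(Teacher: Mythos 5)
Your Steps 1--2 are sound and correspond to what the paper actually establishes: \er{3.11} gives the Gram identity $F''(0)^*F''(0)=G_c''(0)^*G_c''(0)$, and together with the surjectivity of both second derivatives onto $Z^0$ this does produce a unitary $T$ on $Z^0$ with $\dl^0_{v,v}F=T(Q_vc)$; reducing the lemma to ``$T$ is a unimodular scalar'' is a legitimate reformulation. The gap is in Step 3, and it is twofold. First, the polarization of $(\star)$ does not yield the clean bilinear identity $\{v_1;T(A);T(B)\}=\{v_1;A;B\}$: in $(\star)$ the \emph{same} variable $v$ occupies the outer slots of the triple product \emph{and} the argument of $Q_\cdot c$, so after separating holomorphic from anti-holomorphic variables and polarizing in $v$ you only obtain symmetrized sums of three terms in which the $v_i$ are permuted between the first slot and the two arguments of the polarized $Q$; these terms cannot be disentangled, so $A$ and $B$ cannot be varied independently of the outer vectors. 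Second, even granting that identity, the appeal to Schur's lemma is unfounded: $\{v;T(A);T(B)\}=\{v;A;B\}$ is not an intertwining relation for any group action on $Z^0$, so irreducibility of the $K_c$-module $Z^0$ gives you nothing directly, and ruling out non-scalar unitaries (e.g.\ candidates of the form $A\mapsto UAV$ in the matrix cases) requires an explicit computation that your argument does not supply.

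It is worth noting that the paper does not believe \er{3.15} plus unitarity alone carries the general case: its proof is case-by-case. For spin factors it uses $\dim Z^0=1$; for symmetric matrices it uses \er{3.15} \emph{diagonally}, as the vanishing of the positive semidefinite quantity $\mathrm{tr}\,w^*(v_\prime^*v_\prime I-v_\prime v_\prime^*)w$, to derive the component relations \er{3.20} --- a use of the diagonal identity that your polarization route forfeits. For all remaining types (rectangular, antisymmetric, both exceptional domains) the essential input is \er{3.16}, the vanishing of the polarizations $\t p(v,\dl^0_{v,v}F)=0$ for $p\in\PL_{1,1}(Z)$, which comes from the degree-$(1,1)$ term of the Faraut--Koranyi expansion and is \emph{linear} in $w=\dl^0_{v,v}F$; applied to the spin-factor determinants of Peirce $2$-spaces of grid tripotents it yields the linear systems \er{3.25} that pin down $w=r(v)Q_vc$ componentwise. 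Your proposal never invokes \er{3.16}, and without it (or an equivalent substitute) Step 3 cannot be completed.
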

\begin{proof}Consider $w=\dl^0_{v,v}F$ in the proof above
as a quadratic mapping from $Z^1$ to $Z^0.$ By \er{3.11} we have unitarity
\be{3.19}\<w|w\>=\<Q_vc|Q_vc\>.\ee
Consider first the {\bf spin factors} $Z.$ Since $\dim Z^0=1,$ there exists a rational function $r(v)$ on $Z^1$ such that $w=r(v)\ Q_cv$ 
for all $v\in Z^1.$ Then \er{3.19} implies $|r(v)|=1$ for all $v.$ Thus $r(v)$ is constant, as asserted.\\ 
For {\bf symmetric matrices} $Z=\Cl^{r\xx r}_{sym}$ write $v+w=\bb0{v_\prime^T}{v_\prime}w,$ with $v_\prime=(v_2,\ldots,v_r)^T$ and 
$w=(w_{ij})\in\Cl^{(r-1)\xx(r-1)}_{sym}.$ Let $e_i$ denote the $i$-th unit row vector, and let $w_i$ be the $i$-th row of $w.$ We have 
$$\{v;v;w\}=\bb000{v_\prime v_\prime^*w+w\o v_\prime v_\prime^T}$$
and
$$v_\prime^*v_\prime\ I_{r-1}-v_\prime v_\prime^*=\S_{2\le i<j\le n}(v_j\ e_i-v_i\ e_j)^*(v_j\ e_i-v_i\ e_j).$$
By \er{3.15} it follows that
$$0=\<v|v\>\<w|w\>-\<\{v;w;w\}|v\>=2\<v_\prime|v_\prime\>\ \<w|w\>-tr(v_\prime v_\prime^*w+w\o v_\prime v_\prime^T)w^*
=2\ tr\ w^*(v_\prime^*v_\prime\ I_{r-1}-v_\prime v_\prime^*)w.$$
Hence the positive matrix
$$w^*(v_\prime^*v_\prime\ I_{r-1}-v_\prime v_\prime^*)w=\S_{2\le i<j\le n}w^*(v_j\ e_i-v_i\ e_j)^*(v_j\ e_i-v_i\ e_j)w$$
$$=\S_{2\le i<j\le n}(v_j\ w_i-v_i\ w_j)^*(v_j\ w_i-v_i\ w_j)$$
vanishes. It follows that $v_j\ w_i=v_i\ w_j$ for all $2\le i<j\le r.$ Therefore 
\be{3.20}v_j\ w_{ik}=v_i\ w_{jk}\ee
for all $2\le i<j\le r$ and $2\le k\le r.$ There exists a rational function $r(v)$ on $Z^1$ such that $w_{22}=r(v)\ v_2^2$ for all 
$v\in Z^1.$ With \er{3.20} we conclude that $w=r(v)\ Q_vc,$ and \er{3.19} implies that $r(v)$ is constant.\\
The preceding argument can be adopted to the rectangular matrix case and, with some effort, to the anti-symmetric matrix case. We prefer instead to use the {\bf grid-theoretic} approach to Jordan triples \cite{N} which also works for the exceptional cases. A {\bf grid} $\EL$ in an irreducible hermitian Jordan triple $Z$ is a basis consisting of tripotents with certain combinatorial properties \cite[Chapter I.4]{N}. If $\EL$ is a grid and $c\in\EL,$ then $\EL$ is compatible with the Peirce decomposition relative to $c.$ The tripotents $f\in\EL^\la:=\EL\ui Z^\la$ constitute a basis of $Z^\la.$ The characteristic multiplicity $a$ of $Z$ determines the kind of underlying grid. For symmetric matrices $Z=\Cl^{r\xx r}_{sym}$ we have $a=1,$ and spin factors of dimension $d$ have $a=d-2.$ Thus $a$ is odd in the odd-dimensional case. For all remaining cases $a$ is even and $Z$ is spanned by a so-called 'ortho-collinear' grid $\EL.$ Assume now that $a$ is even. By definition \cite[p.~12, p.16]{N},
two tripotents $e, f$ are called {\bf collinear} if
$e\in Z^1_f, f\in Z^1_e$, and an ordered quadruple $(e_1,e_2,e_3,e_4)$ of tripotents is called a {\bf quadrangle} if $e_i,e_{i+1}$ are collinear, $e_i,e_{i+2}$ are orthogonal and $\{e_i;e_{i+1};e_{i+2}\}=e_{i+3}$ for all indices $i$ modulo $4.$\\ 
Fix a tripotent $f\in\EL^0$ and define a rational function $r(v)$ 
(depending
on $f$ a priori) on $Z^1$ by $\<w|f\>=r(v)\ \<Q_vc|f\>.$ We will show that 
\be{3.21}
\<w-r(v)\ Q_vc|f'\>=0\ee
for all $f'\in\EL^0$ which are collinear to $f.$ Since any pair $f_1,f_2\in\EL^0$ can be connected by a chain of collinear tripotents in $\EL^0$ \er{3.21} will imply that $\<w|f\>=r(v)\ \<Q_vc|f\>$ for all $f\in\EL^0.$ Hence $w=r(v)\ Q_vc$ and \er{3.19} shows 
$|r(v)|=1$ for all $v.$ Hence $r(v)=\ll\in\Tl$ is constant, concluding the proof of Lemma \ref{3.j}.\\ 
In order to prove \er{3.21} choose $e\in\EL^1$ such that $e\bot f$ and the Peirce 2-space of $e+f$ contains $f'.$ According to \cite[Theorem 2.13]{N} $Z_{e+f}^2$ is a spin factor of (even) dimension $a+2,$ which is spanned by $a/2$ quadrangles $(e,e^j,f,f^j),\ 1\le j\le a/2,$ with $e^j\in\EL^1$ and $f^j\in\EL^0.$ We call this a {\bf presentation}
\be{3.22}\bb e{e^1/\ldots/e^{a/2}}{f^1/\ldots/f^{a/2}}f\ee
of the Peirce 2-space. By \cite[p. 75, (2.8)]{N}, an even-dimensional spin factor has the grid $\{e_j^\pm:\ 1\le j\le m/2\},$ with quadrangles $(e_1^+,e_j^+,e_1^-,-e_j^-)$ for $j>1.$ This gives a presentation
$$\bb{e_1^+}{e_2^+/\ldots/e_{m/2}^+}{-e_2^-/\ldots/-e_{m/2}^-}{e_1^-}.$$
The spin factor determinant, normalized at $e_1^++e_1^-,$ has the form
$$N(z)=\S_{j=1}^{m/2}\<z|e_j^+\>\<z|e_j^-\>=\<z|e_1^+\>\<z|e_1^-\>-\S_{j=2}^{m/2}\<z|e_j^+\>\<z|-e_j^-\>.$$
Applied to \er{3.22} it follows that the spin factor determinant $N$ of $Z_{e+f}^2,$ normalized by $N(e+f)=1,$ is given by
$$N(z)=\<z|e\>\<z|f\>-\S_j\<z|e^j\>\<z|f^j\>.$$
Let $P:Z\to Z_{e+f}^2$ be the Peirce 2-projection. Then $N_P:=N\oc P$ belongs to $\PL_{1,1}(Z)$ and thus, by \er{3.16} has a vanishing polarization $\t N_P(v,w)=0.$ Since $P$ is compatible with the Peirce decomposition relative to $c,$ this implies
\be{3.23}\<v|e\>\<w|f\>=\S_j\<v|e^j\>\<w|f^j\>.\ee
Similarly, $\t N_P(v,Q_vc)=0$ showing that
\be{3.24}\<v|e\>\<Q_vc|f\>=\S_j\<v|e^j\>\<Q_vc|f^j\>.\ee
Combining \er{3.23} and \er{3.24} we obtain
\be{3.25}\S_j\<v|e^j\>\<\t w|f^j\>=0\ee
where, for the rest of the proof, we put $\t w:=w-r(v)\ Q_vc.$\\ 
To continue, we consider the separate cases. The {\bf rectangular matrices} $Z=\Cl^{r\xx s}$ have the matrix unit grid $\EL=\{E_{ij}:\ 1\le i\le r,\ 1\le j\le s\}.$ Put $c:=E_{11}.$ Then $\EL^0=\{E_{ij}:\ 2\le i\le r,\ 2\le j\le s\}$ and $\EL^1=\{E_{i1},E_{1j}:\ 2\le i\le r,\ 2\le j\le s\}.$ Consider the presentation
$$\bb{E_{1j}}{E_{1\el}}{E_{kj}}{E_{k\el}},\ \bb{E_{i1}}{E_{k1}}{E_{i\el}}{E_{k\el}}$$
of the Peirce 2-spaces of $E_{1j}+E_{k\el}$ and $E_{i1}+E_{k\el}$ involving $f:=E_{k\el}\in\EL^0.$ Define $z_{ij}:=\<z|E_{ij}\>.$ Then  \er{3.25} yields $v_{1\el}\t w_{kj}=v_{k1}\t w_{i\el}=0.$ Since $v_{1\el}v_{k1}\ne 0$ on a dense open subset of $v\in Z^1$ the assertion $\t w_{kj}=\t w_{i\el}=0$ follows, 
proving (\ref{3.21}).
\\
The {\bf anti-symmetric matrices} $Z=\Cl^{n\xx n}_{asym}$ have the symplectic grid $F_{ij}=E_{ij}-E_{ji}=-F_{ji},$ where $1\le i<j\le n.$ Put $c:=F_{12}.$ Then $\EL^0=\{F_{ij}:\ 3\le i<j\le n\}$ and $\EL^1=\{F_{ai}:\ a=1,2,\ i=3,\ldots n\}.$ For $n=4$ we obtain a spin factor. Now suppose $n\ge 5.$ By \cite[Chapter II, (2.4)]{N} the quadrangles have the form $(F_{ij},F_{kj},F_{k\el},F_{i\el}).$ For 
$a=1,2$ we obtain a presentation
$$\bb{F_{ia}}{F_{ja}/F_{ka}}{F_{ik}/F_{ji}}{F_{jk}}$$
of the 6-dimensional Peirce 2-space of $F_{ia}+F_{jk}$ involving $f:=F_{jk}\in\EL^0.$ Define $z_{ij}:=\<z|F_{ij}\>.$ Then \er{3.25} yields two equations written in matrix form
$$\bb{v_{j1}}{v_{k1}}{v_{j2}}{v_{k2}}\ba{\t w_{ik}}{\t w_{ji}}=0.$$
Using linear independence on the dense open subset $v_{j1}v_{k2}\ne v_{j2}v_{k1}$ of $v\in Z^1$ we obtain the assertion $\t w_{ik}=\t w_{ji}=0.$\\
The {\bf exceptional Jordan triple} $Z=\Ol_\Cl^{1\xx 2}$ of rank 2 has the bi-Cayley grid $\EL=\{e_i^\pm:\ 1\le i\le 8\}.$ Put 
$c=e_1^+.$ Then $\EL^0=\{e_1^-,e_5^-,e_{k+4}^+:\ k=2,3,4\}$ and $\EL^1=\{e_5^+,e_k^\pm,e_{k+4}^-:\ k=2,3,4\}.$ The quadrangle relations \cite[Chapter II, (3.3),(3.5)]{N} yield presentations
$$\bb{-e_5^+}{e_2^-/e_3^-/e_4^-}{e_6^+/e_7^+/e_8^+}{e_1^-},\quad
\bb{e_{k+4}^-}{e_k^-/e_{k'}^+/e_{k''}^+}{e_5^-/-e_{k''+4}^+/e_{k'+4}^+}{e_1^-}$$
of the four 8-dimensional Peirce 2-spaces involving $f:=e_1^-.$ Here $\{k,k',k''\}=\{2,3,4\}$ in cyclic order. Define $z_i^\Le:=\<z|e_i^\Le\>.$ From \er{3.25} we obtain four equations written in matrix form
$$\begin{pmatrix}0&v_2^-&v_3^-&v_4^-\\v_2^-&0&v_4^+&-v_3^+\\v_3^-&-v_4^+&0&v_2^+\\v_4^-&v_3^+&-v_2^+&0&\end{pmatrix}
\da{\t w_5^-}{\t w_6^+}{\t w_7^+}{\t w_8^+}=0.$$
Since the matrix has non-zero determinant on the dense open subset $v_2^-v_2^++v_3^-v_3^++v_4^-v_4^+\ne 0,$ we obtain the assertion
$\t w_5^-=\t w_6^+=\t w_7^+=\t w_8^+=0.$\\ 
The {\bf exceptional Jordan algebra} $Z=\HL_3^\Cl(\Ol)$ has the Albert grid $\EL=\{[1],[2],[3]\}\iu\{[ij]_r^\pm:\ 1\le i<j\le 3,\ 1\le r\le 4\}.$ Put $c=[1].$ Then $\EL^1=\{[1a]_r^\pm:\ a=2,3,\ 1\le r\le 4\}$ and $\EL^0=\{[2],[3]\}\iu\{[23]_r^\pm:\ 1\le r\le 4\}.$ The quadrangle relations \cite[Chapter II, (3.13),(3.14),(3.15)]{N} for $\Le=\pm,$ yield presentations
$$\bb{[13]_1^\Le}{[12]_1^\Le/[12]_r^{-\Le}}{[23]_1^\Le/-[23]_r^\Le}{[2]},$$
$$\bb{[13]_r^\Le}{[12]_1^{-\Le}/[12]_r^\Le/[12]_{r'}^{-\Le}/[12]_{r''}^{-\Le}}{[23]_r^\Le/[23]_1^\Le/[23]_{r''}^{-\Le}/-[23]_{r'}^{-\Le}}{[2]}$$
of the eight 10-dimensional Peirce 2-spaces involving $f:=[2].$ Here $\{r,r',r''\}=\{2,3,4\}$ in cyclic order. Define 
$v_\el^\Le:=\<v|[12]_\el^\Le\>$ and $\t w_\el^\Le:=\<\t w|[23]_\el^\Le\>.$ Then \er{3.25} yields eight equations 
\be{3.26}\t w_1^\Le v_1^\Le -\S_{r\ge 2}\t w_r^\Le v_r^{-\Le}=0,\ee
\be{3.27}w_1^\Le v_r^\Le+\t w_r^\Le v_1^{-\Le}+\t w_{r''}^{-\Le}v_{r'}^{-\Le}-\t w_{r'}^{-\Le}v_{r''}^{-\Le}=0,\ r\ge 2.\ee
Now consider the split octonions $\Ol_\Cl,$ with basis $\{c_\el^\pm:\ 1\le\el\le 4\}$ as defined in \cite[Chapter III.3.1]{N}.
By the multiplication table \cite[III.(3.6)]{N} the product of elements $x=\S_{\el=1}^4\S_{\Le=\pm}x_\el^\Le c_\el^\Le\in\Ol_\Cl$ has components
$$(x\cdot y)_1^\Le=x_1^\Le y_1^\Le -\S_{r\ge 2}x_r^{-\Le}y_r^\Le,$$
$$(x\cdot y)_r^\Le=x_r^\Le y_1^\Le+x_1^{-\Le}y_r^\Le+x_{r''}^{-\Le}y_{r'}^{-\Le}-x_{r'}^{-\Le}y_{r''}^{-\Le},\ r\ge 2.$$
Therefore the equations \er{3.26} and \er{3.27} are equivalent to $\u{\t w}\cdot\u v=0,$ where $\u x\in\Ol_\Cl$ is defined by 
$\u x_1^\Le:=x_1^\Le,\ \u x_r^\Le:=x_r^{-\Le},\ r\ge 2.$ Since $\u v$ is not a zero-divisor in $\Ol_\Cl$ for a dense open subset of 
$v\in Z^1$ the assertion $\t w_\el^\Le=0$ follows.\\
This demonstrates \er{3.21} in all cases, and concludes the proof of Lemma \ref{3.j}.
\end{proof}

We now complete the proof of Theorem \ref{3.b}. We need only to prove the sufficiency of the condition for F. Suppose that $F$ satisfies \er{3.28}. By Lemma \ref{3.j} there exists a constant $\lt\in\Tl$ such that $\dl^0_{v,v}F=\lt G''(0)(v)$ for all $v\in Z^1,$ where $G=G_c$ is given by \er{2.4}. By assumption, $F''(0)(e,z)=0$ for all $z\in Z^2\op Z^1.$ Therefore $F$ agrees up to second order with a Mok type embedding $G_\lt(z)=\o\lt(G(\lt z)).$ The following Lemma \ref{3.c} shows $F=G_\lt.$

\begin{lemma}\label{3.c} Let $\Bl$ be the unit ball in $\Cl^d$ and let $D$ be the symmetric domain in a Jordan triple $Z$ of rank $r>1,$
with $\dim(Z_c^2+Z_c^1)=d$ for a minimal tripotent $c\in Z$. Let $F:\Bl\to D$ be a Bergman isometry which agrees up to order 2 with a Mok embedding $G.$ Then $F=G.$
\end{lemma}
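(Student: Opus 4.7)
The plan is to induct on $n\ge 2$ and show that $F^{(k)}(0)=G^{(k)}(0)$ for every $k$, whence $F=G$ on the connected ball $\Bl$ by analytic continuation of holomorphic maps. The base case $k\le 2$ is the hypothesis, and the inductive step will combine Lemma \ref{3.d} taken with $m=1$ with the surjectivity statement of Lemma \ref{3.g}.

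First I place $F$ in the setting of the rest of this section. Since $G$ is a Mok embedding, the identities \er{2.5} yield $G''(0)(e,\cdot)=0$ for the unit vector $e\in V$ with $F'(0)e=G'(0)e=c$; order-2 agreement then transfers \er{3.28} to $F$, so Lemmas \ref{3.f} and \ref{3.g} all apply. Using the isometric isomorphism $F'(0)=G'(0)$ to identify $V\cong Z_c^2\op Z_c^1$, Lemma \ref{3.f} writes $F(z)=z+H_F(z)$ and $G(z)=z+H_G(z)$ with $H_F,H_G$ valued in $Z^0$. Hence the difference $\t F:=F-G$ takes values in $Z^0$, and every derivative $\t F^{(k)}(0)$ has image in $Z^0$ as well.

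Now assume inductively that $F^{(k)}(0)=G^{(k)}(0)$ for all $k\le n$, where $n\ge 2$. Applying Lemma \ref{3.d} with $m=1$ (allowed since $1\le n$) and then subtracting the identity for $G$ from the one for $F$, the equal second derivatives cancel and there remains
\[
\<F^{(2)}(0)(u_0,u_1)\,|\,\t F^{(n+1)}(0)(v_0,\ldots,v_n)\>=0
\]
for all $u_0,u_1,v_0,\ldots,v_n\in V$. Lemma \ref{3.g} asserts that the vectors $F^{(2)}(0)(x,y)$ with $x,y\in Z^1$ span $Z^0$, so $\t F^{(n+1)}(0)(v_0,\ldots,v_n)\in Z^0$ is orthogonal to all of $Z^0$ and must therefore vanish. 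This yields $F^{(n+1)}(0)=G^{(n+1)}(0)$, closing the induction, and identity of Taylor series on the connected domain $\Bl$ forces $F=G$.

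The point that requires any real thought is the exact choice $m=1$ in Lemma \ref{3.d}. The choice $m=0$ would only put $\t F^{(n+1)}(0)(\ldots)$ orthogonal to $\mathrm{Ran}\,F'(0)=Z^2\op Z^1$, which is automatic since this derivative already lies in the complementary subspace $Z^0$, and any larger value of $m$ produces a weaker consequence than needed. Pairing instead against the image of the second fundamental form $F^{(2)}(0)$, which by Lemma \ref{3.g} exhausts $Z^0$ itself, is precisely what converts orthogonality to vanishing.
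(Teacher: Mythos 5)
Your proposal is correct and follows essentially the same route as the paper: induct on the order of agreement, apply Lemma \ref{3.d} with $m=1$ to pair the second derivative against the $(n+1)$-st, note that the higher derivatives take values in $Z^0$, and use that the second derivatives of the Mok embedding span $Z^0$ to upgrade orthogonality to vanishing. The only cosmetic difference is that you cite Lemma \ref{3.g} for the spanning statement while the paper appeals directly to the construction \eqref{2.1}; these are the same fact.
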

\begin{proof} Assume, by induction, that $F$ and $G$ agree up to order $n\ge 2.$ Then $F''(0)=G''(0)$ and Lemma \ref{3.d} implies for 
$x,u,v_0,\ldots,v_n\in V$ 
$$\<\dl^0_{x,u}G|\dl^0_{v_0,\ldots,v_n}F\>=\<\dl^0_{x,u}F|\dl^0_{v_0,\ldots,v_n}F\>=\<\dl^0_{x,u}G|\dl^0_{v_0,\ldots,v_n}G\>.$$
Now $\dl^0_{v_0,\ldots,v_n}F$ and $\dl^0_{v_0,\ldots,v_n}G$ take values in $Z^0,$ and the Jordan theoretic construction \er{2.1} implies that $Z^0$ is spanned by the vectors $\dl^0_{x,u}G,$ for $x,u\in Z^1$ arbitrary. Hence $\dl^0_{v_0,\ldots,v_n}F=\dl^0_{v_0,\ldots,v_n}G.$ Thus $F$ and $G$ agree up to order $n+1.$
\end{proof}

Specializing Theorem \ref{3.b} to the case of tube domains $D$ of rank 2, i.e. the Lie balls, and using Lemma \ref{3.e} we obtain:
\begin{corollary}\label{3.k} Suppose $F:\Bl_d\to D$ is a Bergman isometry from $\Bl_d$ into the Lie ball $D$ in $\Cl^{d+1}.$ Then $F$ is a Mok embedding if and only if $(Ran\ F'(0))^\perp=\Cl\xi$ for a minimal tripotent $\xi$.
\end{corollary}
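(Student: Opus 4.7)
My plan is to reduce both implications to Theorem \ref{3.b}, using the reformulation of its vanishing condition supplied by Lemma \ref{3.e} (specifically \er{3.8}), together with the rank-$2$ structure of the Lie ball.

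For the forward direction, assume $F$ is a Mok embedding with minimal tripotent $c$. By Definition \ref{2.a}, $F'(0)$ maps $\Cl^d$ isometrically onto $Z^2_c\oplus Z^1_c$. Since the Peirce decomposition is orthogonal with respect to $\<\,\cdot\,|\,\cdot\,\>$ and $\dim Z=d+1$, it follows that $(Ran\ F'(0))^\perp=Z^0_c$ is $1$-dimensional. Because $D$ has rank $2$, the tripotent $c$ extends to a frame of minimal tripotents $\{c,c'\}$ with $c'\in Z^0_c$; for dimensional reasons $Z^0_c=\Cl c'$, so $(Ran\ F'(0))^\perp=\Cl c'$ with $c'$ minimal. (In the concrete realisation of Example \ref{1.b} one may take $c'=\bar c$, as in Lemma \ref{1.a}.)

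For the converse, assume $(Ran\ F'(0))^\perp=\Cl\xi$ for some minimal tripotent $\xi$. Using that $D$ has rank $2$, I complete $\xi$ to a frame $\{\xi,c\}$ of orthogonal minimal tripotents. Then $\xi\in Z^0_c$ and, since $\dim Z^0_c=1$, in fact $Z^0_c=\Cl\xi$, so
$$Z^2_c\oplus Z^1_c=(Z^0_c)^\perp=(\Cl\xi)^\perp=Ran\ F'(0).$$
Define $e:=F'(0)^{-1}c\in\Cl^d$. This is a unit vector because $F'(0)$ is a surjective isometry onto $Z^2_c\oplus Z^1_c$ and $c$ has norm $1$. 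By construction $\dl^0_eF=F'(0)e=c$ is a minimal tripotent and $F'(0)\Cl^d=Z^2_c\oplus Z^1_c$, so both hypotheses in \er{3.8} are verified, yielding $F''(0)(e,\cdot)=0$. Theorem \ref{3.b} then identifies $F$ as a Mok embedding.

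I do not expect any substantive obstacle: beyond the direct invocation of Theorem \ref{3.b} and Lemma \ref{3.e}, the only Jordan-theoretic input is the standard fact that in a rank-$2$ irreducible triple every minimal tripotent extends to a frame whose second element spans the associated Peirce $0$-space, which in the spin factor is witnessed explicitly by the conjugation $c\mapsto\bar c$ of Example \ref{1.b}.
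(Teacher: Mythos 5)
Your proposal is correct and follows essentially the same route as the paper, which simply asserts the corollary as the specialization of Theorem \ref{3.b} via Lemma \ref{3.e}: in the rank-$2$ Lie ball the Peirce $0$-space of a minimal tripotent is one-dimensional and spanned by the complementary frame element, so the condition $F'(0)V=Z^2_c\oplus Z^1_c$ of \er{3.8} is exactly the statement that $(Ran\ F'(0))^\perp$ is the line through a minimal tripotent. Your filling-in of the dimension count and the frame completion is the intended argument; no gaps.
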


\section{Irrational embeddings into the Lie ball}
The symmetric domains of tube type and rank 2 are the Lie balls, Example \ref{1.b}. In this section we construct and classify all isometric holomorphic embeddings from the unit ball $\Bl=\Bl_d$ into the Lie ball $D$ with $\dim D=d+1.$ Thus $\Bl$ and $D$ have the same genus $p=d+1.$ Similar results have also been obtained in \cite{XY} using explicit computations. Let $F:\Bl\to D$ be an isometric embedding into the Lie ball, with $F(0)=0.$

\begin{lemma}\label{4.a} Suppose $\dl^0_{e,e}F\ne 0$ for some $e\in\Cl^d.$ Then $\dl^0_eF$ is a scalar multiple of a maximal tripotent.
\end{lemma}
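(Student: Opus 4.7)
My plan is to combine Lemma 3.e with the special rank-2 structure of the spin factor (Example 1.3, Lemma 1.a) and the decomposition from Lemma 3.f. After normalizing $e$ to a unit vector (which does not change whether $\dl^0_eF$ is a scalar multiple of a maximal tripotent), the hypothesis $\dl^0_{e,e}F\ne 0$ and identity \er{3.7} of Lemma 3.e tell us that $c:=\dl^0_eF$ is \emph{not} a minimal tripotent, so in the rank-2 spin factor it admits a spectral decomposition $c=\ll_1 f_1+\ll_2 f_2$ with orthogonal minimal tripotents $f_1,f_2$ and both $\ll_1,\ll_2>0$, subject to $\ll_1^2+\ll_2^2=\<c|c\>=1$. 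The goal is to force $\ll_1=\ll_2=1/\F2$, since this is exactly the condition that $c=\ll_1(f_1+f_2)$ be a scalar multiple of the maximal tripotent $f_1+f_2$.

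Next I would reformulate the isometry condition using the spin factor's explicit quasi-determinant. By Lemma 1.a, $\lD(u,v)=1-\<u|v\>+N(u)\o{N(v)}$, so \er{1.3} becomes $\<F(z)|F(z)\>-|N(F(z))|^2=\<z|z\>$. By Lemma 3.f the non-linear part of $F$ lies in $V^\perp$, and because $\dim V^\perp=\dim Z-d=1$, I may write $F(z)=\L z+H(z)\xi$ for a unit vector $\xi\in V^\perp$ and a holomorphic scalar $H:\Bl\to\Cl$. Using $\<\L z|\xi\>=0$ and $\<\xi|\xi\>=1$, the isometry condition collapses to $|H(z)|^2=|N(F(z))|^2$ on $\Bl$. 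Since $H$ and $N(F)$ are both holomorphic with equal moduli, holomorphic rigidity yields $H=\eta N(F)$ for a unimodular constant $\eta\in\Tl$ (the case $H\equiv 0$ would force $F=\L z$, but then a second-order expansion gives $\dl^0_{e,e}F=0$, contradicting the hypothesis).

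The second-order expansion of $H$ at the origin gives $H(z)=\eta N(\L z)+O(z^3)$, so that
\[
\dl^0_{e,e}F=2\eta N(c)\xi,\qquad \|\dl^0_{e,e}F\|^2=4|N(c)|^2.
\]
On the other hand, Lemma 3.e's norm identity \er{3.5} applied with $u=v=x=y=e$ yields $\|\dl^0_{e,e}F\|^2=2(\<Q_ee|e\>-\<Q_cc|c\>)=4\ll_1^2\ll_2^2$ in spectral terms. Combining, $|N(c)|=\ll_1\ll_2$, and using $N(c)=\f12\<c|\o c\>$ the identity rewrites as $|\<c|\o c\>|=2\ll_1\ll_2$. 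Cauchy-Schwarz in the hermitian inner product gives $|\<c|\o c\>|\le\<c|c\>^{1/2}\<\o c|\o c\>^{1/2}=1$, with equality precisely when $c$ and $\o c$ are linearly dependent, i.e.\ when $c$ is a scalar multiple of a maximal tripotent.

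The crux of the argument is therefore to upgrade this inequality to an equality, equivalently to force $\ll_1\ll_2=1/2$. I expect this to be the main obstacle: the bound $\ll_1\ll_2\le(\ll_1^2+\ll_2^2)/2=1/2$ is saturated only at $\ll_1=\ll_2$. My plan for closing this gap is to exploit the global functional equation $H=\eta N(\L z+H\xi)=\eta(N(\L z)+H\<\L z|\o\xi\>+H^2 N(\xi))$ obtained by expanding $N$ via the bilinear form $N_{1,1}(u,v)=\<u|\o v\>$. Since $F$ is holomorphic and single-valued on all of $\Bl$, the implicit quadratic in $H$ must admit a holomorphic solution with $H(0)=0$; this forces the discriminant $(1-\eta\<\L z|\o\xi\>)^2-4\eta^2 N(\xi)N(\L z)$ to be a perfect square as a polynomial in $z$, which in turn yields $N(\xi)N(\L z)\equiv 0$. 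Since $N(\L e)=N(c)\ne 0$ under our hypothesis, we must have $N(\xi)=0$, so $\xi$ is (a unit multiple of) a minimal tripotent. Writing $\xi=\o c/\|\o c\|$ up to normalization and plugging back into the identity $\dl^0_{e,e}F=2\eta N(c)\xi\in Z^0_c$, and matching with the Peirce-decomposed form of $c$ in the spin factor, I expect the resulting algebraic relation (combined with $\|\dl^0_{e,e}F\|^2=4\ll_1^2\ll_2^2$ and the explicit form $|\<c|\o c\>|=2\ll_1\ll_2$) to force the equality case $\ll_1=\ll_2$, completing the proof.
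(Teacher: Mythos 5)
Your reduction to the equality $\ll_1=\ll_2$ and the bookkeeping up to the identity $|N(c)|=\ll_1\ll_2$ are correct, but that identity is vacuous: writing $f_2=\lo\,\o{f_1}$ with $|\lo|=1$ (every frame of the spin factor has this form), one computes $\<c|\o c\>=2\lo\ll_1\ll_2$ for \emph{any} $c=\ll_1f_1+\ll_2f_2$, so $|N(c)|=\f12|\<c|\o c\>|=\ll_1\ll_2$ holds identically and comparing your two expressions for $\|\dl^0_{e,e}F\|^2$ yields no information. The step you propose to close the gap is the one that fails: holomorphy of $H$ on $\Bl$ does \emph{not} force the discriminant of the quadratic to be a perfect square as a polynomial in $z$; it only requires a holomorphic square root on the simply connected ball, which exists whenever the discriminant is zero-free there. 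The paper's own irrational embeddings \er{4.3} are exactly of this type: there $\<\L z|\o\xi\>=0$, $N(\xi)=-\lo/2\ne0$, $N(\L z)\not\equiv0$, and the discriminant $1+\eta^2\lo^2u^2+\eta^2\lo\<v|\o v\>$ is irreducible yet zero-free on $\Bl$ (since $|u^2\lo+\<v|\o v\>|\le|u|^2+\<v|v\><1$). So your intended conclusion $N(\xi)N(\L z)\equiv0$ is false and the route cannot be repaired.

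For comparison, the paper's proof is entirely different and much shorter: it rescales $e$ so that $\|\dl^0_eF\|=\|\dl^0_{e,e}F\|$, substitutes $u=v=x=y=e$ into \er{3.5} using $Q_\lh\lh=\<\lh|\lh\>\lh-\f12\<\lh|\o\lh\>\o\lh$ for $\lh=\dl^0_eF$, asserts $\<\lh|\lh\>=|\<\lh|\o\lh\>|$, and invokes equality in Cauchy--Schwarz to get $\o\lh=\lt\lh$. You should be aware, however, that this computation actually yields only $\|\lh\|^2=|\<\lh|\o\lh\>|^2$, which is equivalent to the chosen normalization and again carries no information; and indeed the statement, read in isolation, admits counterexamples built from the paper's own maps: for the Mok embedding $G_c(z_1,z')=z_1c+\f{(z',z')}{1+z_1}\o c+z'$ into the Lie ball, any $e$ with $G_c'(0)e=z_1c+z'$, $z_1\ne0$ and $(z',z')\ne0$, satisfies $\dl^0_{e,e}G_c=2(z',z')\,\o c\ne0$ while $z_1c+z'$ has distinct singular values and hence is not a multiple of a maximal tripotent. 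The lemma is only needed (and only correct) for the specific $e$ occurring in the proof of Theorem 4.2, where $\dl^0_eF=\la(c+\lo\o c)\in Z^2_c\op Z^0_c$ and $(Ran\ F'(0))^\perp$ is spanned by a rank-$2$ element; a complete argument must use that additional information, which neither your proposal nor, as written, the paper's proof does. So the verdict is: there is a genuine gap at exactly the point you identified as the crux, and the repair you sketch does not work.
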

\begin{proof} By rescaling we may assume that $\|\dl^0_eF\|=\|\dl^0_{e,e}F\|.$ Put $\lh:=\dl^0_eF.$ Since $F'(0)$ is isometric and, by \er{1.10}, $Q_\lh\lh=\<\lh|\lh\>\lh-\f12\<\lh|\o\lh\>\o\lh$, we obtain with \er{3.5}
$$\<\lh|\lh\>^2=\<e|e\>^2=\<Q_ee|e\>=\f12\|\dl^0_{e,e}F\|^2+\<Q_\lh\lh|\lh\>$$
$$=\f12\|\lh\|^2+\<Q_\lh\lh|\lh\>=\f12\|\lh\|^2+\<\lh|\lh\>^2-\f12|\<\lh|\o \lh\>|^2.$$
Therefore $\<\lh|\lh\>=|\<\lh|\o \lh\>|,$ and Cauchy-Schwarz implies $\o\lh=\lt\lh$ for some $\lt\in\Tl.$ 
There exists a minimal tripotent $e_1$ such that $\lh=\la e_1+\lb\o e_1,$ where $\la>0$ and $\lb\in\Cl.$ Then
$$\o\lh=\la\o e_1+\o\lb e_1=\lt\lh=\lt\la e_1+\lt\lb\o e_1.$$
By uniqueness in the frame $e_1,\o e_1$ (or linear independence) it follows that $\o\lb=\lt\la.$ Thus $\lh=\la(e_1+\o\lt\o e_1)$
is a multiple of the maximal tripotent $e_1+\o\lt\o e_1.$
\end{proof}

\begin{theorem} Let $F:\Bl\to D$ be a Bergman isometric embedding into the Lie ball, with $F(0)=0.$ Write $Ran\ F'(0)^\perp=\Cl\lx$ for a unit vector $\lx.$ If $\lx$ has rank 1 (and hence is a minimal tripotent) then $F$ is a Mok embedding.
If $\lx$ has rank 2 (the non-rational case), then there exist a frame of minimal tripotents $e_1,e_2\in Z$ and a unimodular constant 
$\lt\in\Tl$ such that $F_\lt(z):=\o\lt F(\lt z)$ has the form
\be{4.3}F_\lt(u,v)=u\f{e_1+e_2}{\F 2}+v+\f{e_2-e_1}{\F 2}\(1-\F{1+u^2+\<\o e_1|e_2\>\ \<v|\o v\>}\).\ee
Conversely, any such mapping is an isometric embedding.
\end{theorem}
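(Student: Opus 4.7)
The rank-$1$ case is immediate from Corollary \ref{3.k} and Definition \ref{2.a}: the hypothesis forces $\lx$ to be a minimal tripotent, hence $F$ is a Mok embedding. For the rank-$2$ case, my plan is to (i) choose a frame of minimal tripotents in which $\lx$ takes a simple form, (ii) reduce $F$ via Lemma \ref{3.f} to a single scalar unknown $h$, (iii) convert the isometry condition \er{1.3} into an explicitly solvable quadratic equation for $h$ using the spin-factor quasi-determinant of Lemma \ref{1.a}, and (iv) normalize via $\lt$ and a frame rotation to reach \er{4.3}.

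\emph{Steps (i)--(ii).} Since $\lx$ is a unit vector of rank $2$, the element $\F 2\,\lx$ is a maximal tripotent $\lh$. Picking any maximal tripotent $\lh'\bot\lh$ (available because $d\ge 2$) and setting $e_1:=(\lh'-\lh)/2$, $e_2:=(\lh'+\lh)/2$ yields a frame of minimal tripotents with $e_1+e_2=\lh'$ and $(e_2-e_1)/\F 2=\lx$. Decompose $V=\Cl\op W$ so that $F'(0)(u,v)=u(e_1+e_2)/\F 2+v$ with $W$ mapping isometrically into the joint Peirce space $Z_{12}$. Lemma \ref{3.f} then gives
\be{planA}F(z)=u\,\f{e_1+e_2}{\F 2}+v+h(z)\,\f{e_2-e_1}{\F 2}\ee
for a holomorphic $h:\Bl\to\Cl$ with $h(0)=0$.

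\emph{Step (iii): scalar equation and its solution.} A direct computation using Peirce orthogonality, the relations $\<A|A\>=\<B|B\>=1$, $\<A|B\>=0$ for $A=(e_1+e_2)/\F 2$, $B=(e_2-e_1)/\F 2$, together with $N(w)=\f12\<w|\o w\>$ from Lemma \ref{1.a}, yields
$$\<F(z)|F(z)\>=|u|^2+\<v|v\>+|h(z)|^2,\quad 2N(F(z))=u^2+\ll\<v|\o v\>-h(z)^2,\quad \ll:=\<\o e_1|e_2\>.$$
Substituting into \er{1.3} and cancelling the common linear terms reduces the isometric condition to
\be{planB}|u^2+\ll\<v|\o v\>-h(z)^2|=2|h(z)|,\qquad z\in\Bl.\ee
Write $p(z):=u^2+\ll\<v|\o v\>$, a holomorphic polynomial in $z=(u,v)$. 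Then $(p-h^2)/(2h)$ is meromorphic on the connected domain $\Bl$ of unit modulus, hence a unimodular constant $\Le\in\Tl$ by the maximum modulus principle. This yields the quadratic $h^2+2\Le h-p=0$, whose root compatible with $h(0)=0$ is $h(z)=-\Le+\F{\Le^2+p(z)}$. The bound $|p(z)|\le|u|^2+\<v|v\><1$ on $\Bl$ (using $|\ll|\le 1$) ensures $\Le^2+p$ never vanishes, so the square root is well-defined single-valuedly.

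\emph{Step (iv): normalization and converse.} The reparametrization $F\mapsto F_\lt$ of \er{2.6} sends $h(z)\mapsto\o\lt\,h(\lt z)$ and $p(z)\mapsto\lt^2\,p(z)$. Choosing $\lt\in\Tl$ with $\o\lt\,\Le=-1$ and absorbing any remaining power of $\lt$ into a unit-scalar rotation of the frame converts $h$ into $h_\lt(z)=1-\F{1+u^2+\ll\<v|\o v\>}$, giving exactly \er{4.3}. For the converse, if $F_\lt$ is defined by \er{4.3}, then $h_\lt=1-\F{1+p}$ satisfies $(h_\lt-1)^2=1+p$, i.e., $h_\lt^2-2h_\lt-p=0$, so \er{planB} holds and hence \er{1.3} does, confirming that $F_\lt$ is a Bergman isometric embedding.

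\textbf{Main obstacle.} The decisive step is the passage from the modulus equation \er{planB} to the algebraic identification $p-h^2=2\Le h$ with a \emph{constant} unimodular $\Le$, which rests on combining the holomorphicity of $h$ and $p$ with the maximum modulus principle applied to a meromorphic unit-modulus ratio. After that, tracking the branches of the square root and the phase freedom in $\lt$ and the frame, so as to land precisely on \er{4.3}, requires some care but is essentially bookkeeping.
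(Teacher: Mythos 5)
Your overall strategy (reduce to a scalar unknown $h$ via Lemma \ref{3.f}, feed the spin-factor quasi-determinant of Lemma \ref{1.a} into the isometry condition, solve a quadratic for $h$) is the same as the paper's, and steps (iii)--(iv) are essentially sound. However, step (i) contains a genuine gap: you assert that ``since $\lx$ is a unit vector of rank $2$, the element $\F 2\,\lx$ is a maximal tripotent.'' This is false for a general rank-$2$ element. In the spin factor a rank-$2$ unit vector has a spectral decomposition $s_1c_1+s_2c_2$ with $s_1\ge s_2>0$, $s_1^2+s_2^2=1$, and it is a scalar multiple of a maximal tripotent only in the degenerate case $s_1=s_2=1/\F2$. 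That equality is not a property of $\lx$ alone; it must be \emph{derived from the isometry condition}. The paper does this by writing $\lx=\la(\o\lo c-\o c)$ with $\la>0$ and $\lo\in\Cl$ arbitrary, observing that the complementary rank-$2$ element $\lh=\la(c+\lo\o c)$ lies in $Ran\,F'(0)$, hence equals $\dl^0_eF$ for some $e$; Lemma \ref{3.e} then forces $\dl^0_{e,e}F\ne 0$ (because $\lh$ is not a minimal tripotent), and Lemma \ref{4.a} --- which uses the second-derivative identity \er{3.5} and Cauchy--Schwarz --- shows $\lh$ must be a scalar multiple of a maximal tripotent, i.e.\ $|\lo|=1$. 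Without this argument your frame $e_1,e_2$ with $(e_2-e_1)/\F2=\lx$ simply does not exist, and everything downstream (the computation of $N(F(z))$, the quadratic, formula \er{4.3}) collapses. You need to insert the Lemma \ref{4.a}-type step before choosing the frame.

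Two smaller remarks. First, you obtain the constant phase $\Le$ by a maximum-modulus argument on the meromorphic ratio $(p-h^2)/(2h)$ from the diagonal identity alone; the paper instead polarizes, getting $h(z)\,\o{h(w)}=N(F(z))\,\o{N(F(w))}$ for all $z,w$, which yields $N(F(z))=\lt\,h(z)$ for a constant $\lt$ immediately and avoids any discussion of zeros and poles --- your route works but is more delicate than necessary. Second, your bookkeeping of the phase is slightly off: with $A=(e_1+e_2)/\F 2$ one has $2N(uA)=\<\o e_1|e_2\>\,u^2$, not $u^2$, so the coefficient $\ll=\<\o e_1|e_2\>$ should also multiply the $u^2$ term in your expression for $2N(F(z))$ unless you have already rotated the frame so that $e_2=\o e_1$; this is absorbable into the $\lt$-normalization, but as written the quadratic you solve is not quite the one the data give you.
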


\begin{proof} By Lemma \ref{3.f}, we have
\be{4.1}F(z)=F'(0)z+h(z)\lx,\ee
where $h:\Bl\to\Cl$ is holomorphic with $h'(0)=0.$ For all $z,w\in\Bl$ it follows from \er{1.12} that
$$1-\<z|w\>=\lD(F(z),F(w))=1-\<F(z)|F(w)\>+N(F(z))\ \o{N(F(w))}$$
$$=1-\<F'(0)z+h(z)\lx|F'(0)w+h(w)\lx\>+N(F(z))\ \o{N(F(w))}$$
$$=1-\<z|w\>-h(z)\o{h(w)}+N(F(z))\ \o{N(F(w))},$$
since $F'(0)$ is isometric and $\lx$ is perpendicular to $Ran\ F'(0).$ Hence
$$h(z)\ \o{h(w)}=N(F(z))\ \o{N(F(w))}$$
for all $z,w\in\Bl$. Thus there exists a constant $\lt\in\Tl$ independent of $z$ such that $N(F(z))=\lt\cdot h(z).$ Replacing $F$ by 
$F_\lz(z):=\o\lt F(\lt z)$ we may therefore assume that $h(z)=N(F(z)),$ since $N$ is a quadratic polynomial.
If $\lx$ is a minimal tripotent, then the claim is proved in Corollary \ref{3.k}. Now suppose $\lx$ has rank 2. Then there exists a minimal tripotent $c$ and constants $\la>0,\lo\in\Cl$ such that $\lx=\la(\o\lo c-\o c)$.
 Then $\lh:=\la(c+\lo\o c)\in\lx^\perp=Ran\ F'(0)$ has also rank 2. Writing $\lh=\dl^0_eF$ for some $e\in\Cl^d$ it follows from Lemma \ref{3.e} that $\dl^0_{e,e}F\ne 0.$ By Lemma 
\ref{4.a} $\lh$ is a scalar multiple of a maximal tripotent, which is only possible if $|\lo|=1.$ Since $\lx$ is a unit vector, it follows that $\la=\f1{\F 2}$ and hence
\be{4.2}\lx=\f{\o\lo c-\o c}{\F 2}\in\ Ran\ F'(0)^\perp,\ \lh=\f{c+\lo\o c}{\F 2}\in\ Ran\ F'(0).\ee
Write $z\in \Cl^d$ as $z=ue+v$ with $u\in\Cl,v\in e^\perp.$
Identifying $Z^1:=<c,\o c>^\perp=(Ran\ F'(0))\ui\lh^\perp=F'(0)e^\perp$ with $e^\perp$ via $F'(0)$ we have $F'(0)z=u\lh+v.$ Since $\o\lx=-\lo\lx$ and $\o\lh=\o\lo\lh$ we obtain
$$2h(z)=2N(F'(0)z+h(z)\lx)=\<F'(0)z+h(z)\lx|\o{F'(0)z}+\o{h(z)}\o\lx\>=\lo u^2+\<v|\o v\>-\o\lo h(z)^2.$$
This yields the quadratic equation $h(z)^2+2\lo h(z)-\lo^2 u^2-\lo\<v|\o v\>=0,$ with solutions
$$h(z)=-\lo\pm\lo\F{1+u^2+\o\lo\<v|\o v\>}.$$
With \er{4.2} it follows that
$$F(ue+v)=u\f{c+\lo\o c}{\F 2}+v+\f{\lo\o c-c}{\F 2}(1\pm\F{1+u^2+\o\lo\<v|\o v\>}).$$
Now $e_1=c,e_2=\lo\o c$ runs over all frames of minimal tripotents, and $\o\lo=\<\o e_1|e_2\>.$ This yields \er{4.3}, the choice
of $\pm$ is uniquely determined by $F(0)=0$. Conversely it follows from the construction that $F_\lt(z)$ is locally defined near 
$z=(u,v)=0$ and satisfies $\lD(F(z),F(w))=1-\<z|w\>$. Thus $F_\lt$ is a local Bergman isometry which extends to an isometry from $\Bl$ to $D$ by Theorem 5.1 below.
\end{proof} 

\begin{remark} In \cite{Mok-ALM} Mok raised a series of questions on the rigidity of holomorphic isometries. The local rigidity at a Bergman holomorphic isometry $F$ is defined there by the following: For any relatively compact open set $\OO\ne U\ic\Bl$
there exists a $\delta>0$ such that any Bergman holomorphic isometry $h$ in a $\delta$-neighborhood of $F$, $\Vert F-h\Vert_{L^\infty(U)}\le\delta$, must be a reparametrization of $F.$ We remark here that the local rigidity does not hold for Mok's rational mapping 
$F_0$. Fix a frame of minimal tripotents $\{c_1,c_2\}$ and let $Z=\Cl c_1\op\Cl c_2\oplus Z^1$ and $\Bl$ be the unit ball in $\Cl c_1\op Z^1$. The mapping $F_0:\Bl\to D$ is then given by
$$F_0(z)=uc_1+\f{\<v|\o v\>}{1+u}c_2+v,\quad z=uc_1+v\in\Bl.$$
Let $\xi=t c_1+s c_2,\lh=s c_1-t c_2$ for $0<t,s<1,\ t^2 +s^2=1.$ The irrational mapping
$$F_{t}(u,v)=u\lh\op p_t(z)\xi\op v,\quad z=uc_1+v\in\Bl$$
with
$$p_t(z)=\f{-[(s^2-t^2)u-1]-\F{[(s^2-t^2)u-1]^2+4ts(ts u^2-\<v|\o v\>)}}{2ts}$$ 
is a Bergman isometry. This is a reparametrization of the mapping $F_\lt(z)$ in Theorem 4.1; its isometric property can also
be checked independently by direct computations. An easy computation shows that when $t\to 0$, $F_{t}\to F_0$ uniformly in any compact subset of the unit ball. Thus the local rigidity at $F_0$ does not hold.
\end{remark}

\section{Extension of holomorphic isometries}
In this final section we give alternative proofs of two results of Mok \cite{Mok-JEMS}, using basic ideas from Hilbert space operator theory. The first result is
\begin{theorem}\cite{Mok-JEMS} Let $f:(\Bl,\ll ds^2_B;0)\to(D,ds^2_D;0)$ be a germ of a holomorphic isometry into a bounded symmetric domain $D,$ with $f(0)=0.$ Then there exists a proper holomorphic isometric embedding $F:\Bl\to D$ extending the germ of holomorphic map $f$. 
\end{theorem}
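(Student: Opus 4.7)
The plan is to use the germ-level kernel identity \er{1.3} to build an isometry between the two Bergman spaces, then recover the components of the extension by applying the adjoint of this isometry to the coordinate functions on $D,$ and finally verify that the resulting global map lands in $D$ by analytically continuing the kernel identity and invoking a connectedness argument on Bergman operators.

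I would first polarize. On a small neighborhood $U$ of $0$ the real-analytic identity $\lD(f(z),f(z))=1-\<z|z\>$ extends uniquely to a sesqui-holomorphic identity
$$\lD(f(z),f(w))=1-\<z|w\>,\quad (z,w)\in U\xx U.$$
Let $\HL^\Bl=L^2_a(\Bl)$ and $\HL^D=L^2_a(D)$ denote the Bergman spaces with reproducing kernels $K^\Bl$ and $K^D$. The polarized identity yields
$$\<K^D_{f(z)}|K^D_{f(w)}\>_{\HL^D}=K^D(f(w),f(z))=K^\Bl(w,z)=\<K^\Bl_z|K^\Bl_w\>_{\HL^\Bl}$$
for $z,w\in U$. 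I would therefore define $V_0(K^\Bl_z):=K^D_{f(z)}$ on $\mathrm{span}\{K^\Bl_z:z\in U\},$ which is well-posed and isometric by the displayed identity; the span is dense in $\HL^\Bl$ because any element orthogonal to each $K^\Bl_z$ for $z\in U$ vanishes on $U$ and hence identically by analyticity. Extending by continuity gives an isometry $V:\HL^\Bl\to\HL^D.$

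Since $D$ is bounded, each coordinate function $\h z_j:Z\to\Cl$ lies in $\HL^D.$ I would define $F_j:=V^*\h z_j\in\HL^\Bl$ and $F:=(F_1,\ldots,F_m),$ so that each $F_j$ is automatically holomorphic on all of $\Bl.$ For $z\in U,$
$$F_j(z)=\<V^*\h z_j|K^\Bl_z\>_{\HL^\Bl}=\<\h z_j|VK^\Bl_z\>_{\HL^D}=\<\h z_j|K^D_{f(z)}\>_{\HL^D}=f_j(z),$$
so $F$ extends the germ $f.$ Because $F\equiv f$ on $U,$ the sesqui-holomorphic functions $(z,w)\mapsto\lD(F(z),F(w))$ and $(z,w)\mapsto 1-\<z|w\>$ on $\Bl\xx\Bl$ coincide on the open subset $U\xx U,$ hence throughout $\Bl\xx\Bl.$

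The main obstacle is to verify $F(\Bl)\ic D.$ From the extended identity $\det B(F(z),F(z))=\lD(F(z),F(z))^p=(1-\<z|z\>)^p>0$ for every $z\in\Bl,$ so the Hermitian operator $B(F(z),F(z))$ has no vanishing eigenvalue on $\Bl.$ At $z=0$ it equals the identity; since the set of positive definite Hermitian operators is the connected component of $I$ inside $\{\det\ne 0\},$ the connectedness of $\Bl$ propagates positivity to all of $\Bl.$ By the Jordan-theoretic characterization $D=\{\lz\in Z:\,B(\lz,\lz)>0\}$ recalled in the proof of Corollary \ref{b}, this yields $F(z)\in D.$ The isometry property then follows from \er{1.3}, and properness from $\lD(F(z),F(z))=1-\<z|z\>\to 0$ as $z\to\dl\Bl,$ forcing $F(z)\to\dl D.$
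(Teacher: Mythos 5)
Your proof is correct, and while it starts exactly as the paper does (polarizing the germ identity and building the isometry $V:L^2_a(\Bl)\to L^2_a(D)$ on the span of kernel vectors), it then diverges in two places. For the extension step, the paper shows that the range of $U=V$ is a quotient module under the adjoints of multiplication operators, proves that $U^*S_{w_i}U$ commutes with all $M_p^*$ and is therefore a multiplier $M_{F_i}$ with $F_i\in H^\oo(\Bl)$; you instead set $F_j:=V^*\h z_j$ for the coordinate functions $\h z_j\in L^2_a(D)$ and read off $F_j=f_j$ on $U$ from the reproducing property. This is shorter and avoids the quotient-module machinery, at the cost of producing $F_j$ only in $L^2_a(\Bl)$ rather than $H^\oo(\Bl)$ -- which is harmless, since boundedness follows once $F(\Bl)\ic D$ is known. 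For that containment the paper uses the characterization of $D$ as the set of bounded evaluation points of $L^2_a(D)$ (Step 4), whereas you analytically continue $\lD(F(z),F(w))=1-\<z|w\>$ to $\Bl\xx\Bl$ and run a connectedness argument on the self-adjoint operators $B(F(z),F(z))$, whose determinants $(1-\<z|z\>)^p$ never vanish, so positivity at $z=0$ propagates; this is essentially the paper's own alternative argument in the remark following the theorem (there phrased via the spectral norm and a line segment to the boundary). Both routes are sound; yours trades operator-module structure for a direct adjoint construction plus the Jordan-theoretic description $D=\{\lz:B(\lz,\lz)>0\}$, and like the paper you treat only $\ll=1$ explicitly.
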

\begin{proof}
We suppose $\ll=1$ and the general case is almost the same.
Let $\Omega$ be a bounded domain in $\mathbb C^d$ and  $H$  a Hilbert space consisting of holomorphic functions on $\lO$ which contains all polynomials. A point $z\in\Cl^d$ is called an {\bf evaluation point} of $H$ if and only if the map
$$\Lt_z: p\to p(z)\quad\forall\mbox{polynomials }p$$
is bounded on $H$. Let $vp(H)\ic\Cl^d$ be the set of all evaluation points. For any bounded symmetric domain $\lO$, we have 
$vp(L^2_a(\lO))=\lO$, which can be proved by some elementary arguments which we omit. For any choice of distinct points 
$z_1,\ldots,z_m\in\lO$, for $\lO=\Bl$ or $D$, the reproducing kernel matrix $A:=(K^\lO(z_i,z_j))$ is positive definite, and hence invertible. This implies that the kernel vectors $K^\lO_{z_j},\,j=1,\ldots,m$ are linearly independent. In fact, if $\S_{j}K^\lO_{z_j}v_j=0$ for some column vector $v\in\Cl^{m\xx 1},$ then for any $i$ we have
$$0=\S_{j}K^\lO_{z_j}(z_i)v_j=\S_{j}K^\lO(z_i,z_j)v_j,$$
i.e., $Av=0$ and hence $v=0.$ \\
{\bf Step 1: Isometric embedding of function spaces}. As argued in \cite{Mok-JEMS}, for some neighborhood $B_\ld$ of $0$ we have
$$K^\Bl(z_1,z_2)=K^D(f(z_1),f(z_2))$$
for $z,w\in B_\ld.$ Define a map $U:L^2_a(\Bl)\to L^2_a(D)$ by 
$$U(K^\Bl_z)=K^D_{f(z)}$$ 
for any $z\in B_\ld.$ Since the $K_z^\Bl$ are linearly independent, the map $U$ is well defined on the linear span $E$ of 
$\{K^\Bl_z, \,z\in B_\ld\}$. Then
$$\langle
U K^\Bl_z|U K^\Bl_w\rangle=\langle
K^D_{f(z)}|K^D_{f(w)}\rangle
=K^D(f(z),f(w))=K^\Bl(z,w)=\langle K^\Bl_z|K^\Bl_w\rangle,$$
for $z,w\in B_\ld,$ thus $U$ is isometric on $E.$ Since $E$ is dense
 in $L^2_a(\Bl)$  it follows that $U$ has a unique extension to an isometry $U$ on $L^2_a(\Bl).$ Moreover, we have 
$$U^*K^D_{f(w)}=K^\Bl_w$$ 
for all $w\in B_\ld$, since for any $z\in B_\ld$
$$\langle K^\Bl_z|U^*K^D_{f(w)}\rangle
=\langle U K^\Bl_z|K^D_{f(w)}\rangle=
\langle K^D_{f(z)}|K^D_{f(w)}\rangle=
K^D(f(z),f(w))=K^\Bl(z,w)=\langle K^\Bl_z|K^\Bl_w\rangle.$$\\
{\bf Step 2: Operators on quotient spaces}. Let $H=U L^2_a(\Bl)=\o{span}\{K^D_{f(z)}:z\in B_\ld\}\ic L^2_a(D)$. For any $z\in D$ the multiplication operator $M_g$ by bounded holomorphic functions $g$ on $D$ satisfies
$$M_g^*K^D_z=\o{g(z)}K^D_z$$
for $g\in H^\oo(D)$. Hence $H$ is invariant under $M_g^*$ and is therefore a quotient module in $L^2_a(D)$. Let $P$ be the orthogonal
projection from $L^2_a(D)$ onto $H$. Denote 
$$S_g=P M_g P.$$
Then $S_g^*=P M_g^*P=M_g^*P.$ For any polynomial $p$ and $z\in B_\delta$ we have
$$U^*S^*_g U M^*_p K^\Bl_z=\o{p(z)}U^*S^*_g U K^\Bl_z=\o{p(z)}U^*S^*_g K^D_{f(z)}$$
$$=\o{p(z)}U^*M^*_g K^D_{f(z)}=\o{p(z)g(f(z))}U^*K^D_{f(z)}=\o{p(z)g(f(z))}K^\Bl_z$$
and, similarly,
$$M_p^*U^*S^*_g U K^\Bl_z=M_p^*U^*S^*_g K^D_{f(z)}=M_p^*U^*M^*_g K^D_{f(z)}$$
$$=\o{g(f(z))}M_p^*U^*K^D_{f(z)}=\o{g(f(z))}M_p^*K^\Bl_z=\o{g(f(z))}\o{p(z)}K^\Bl_z.$$
By density, it follows that
$$(U^*S^*_gU)M^*_p=M^*_p(U^*S^*_gU).$$
Therefore $U^*S_gU$ commutes with each $M_p.$ Since $p$ is arbitrary, it follows that $U^*S_g U$ is a multiplier on $L^2_a(\Bl).$\\
{\bf Step 3: Map extension}. Choose the coordinate functions $w_1,\ldots,w_m$ on $D\ic\Cl^m$. Then $U^*S_{w_i}U$ is a multiplier on 
$L^2_a(\Bl)$. Therefore 
$$U^*S_{w_i}U=M_{F_i}$$ 
for some $F_i\in H^\oo(\Bl)$. Consequently $U^*S_q U=M_{q\,\oc F}$ for any polynomial $q$ on $D\ic\Cl^m$. We claim that 
$F=(F_1,\ldots,F_m)$ extends the germ $f=(f_1,\ldots,f_m)$. Indeed, for each $i$ and $z\in B_\ld$, we have
$$\o{F_i(z)}K^\Bl_z=M^*_{F_i}K^\Bl_z=U^*S^*_{w_i}U K^\Bl_z=U^*S^*_{w_i}K^D_{f(z)}=U^*M^*_{w_i}K^D_{f(z)}=\o{f_i(z)}K^\Bl_z.$$
Thus $F_i(z)=f_i(z)\, \forall i$.\\
{\bf Step 4: $F$ maps $\Bl$ into $D$}. For any $z\in\Bl$, we have to show that the map $q\to q(F(z))$ is bounded on $L_a^2(D).$ Since 
$K^\Bl_0=1,$ we have $UK_0^\Bl=K_0^D=1$ and
$$q\oc F=M_{q\oc F}(1)=U^*S_q U(1)=U^*Pq.$$ 
Therefore, when $z\in\Bl$,
$$|q(F(z))|\le C\|q\oc F\|_\Bl=C\|U^*Pq\|_\Bl\le C\|q\|_D$$
for some positive constant $C$. This implies $F(z)\in D$.\\
{\bf Step 5: $F$ is proper}. A reproducing kernel argument shows that $U K^\Bl_z=K^D_{F(z)}$ for each $z\in\Bl,$ and 
$K^\Bl(z_1,z_2)=K^D(F(z_1),F(z_2))$ for any $z_1,z_2\in\Bl.$ So, if $z\to\dl\Bl,$ then $\|K^\Bl_z\|\to\oo$ and hence $\|K^D_{F(z)}\|\to\oo$, $F(z)\to\dl D.$
\end{proof}

\begin{remark} We may also give an explicit formula for the isometric embedding $U$ in the proof of Theorem 5.1. We have
$$(Uh)(w)=\I_\Bl dz\,h(z) K^D_{F(z)}(w),\quad h\in L_a^2(\Bl),\quad w\in D.$$
To prove the identity it is enough to take $h=K^\Bl_z$, in which case $Uh(w)=K^D_{F(z)}(w)$ and 
$$\I_\Bl dt\,h(t)K^D_{F(t)}(w)=\I_\Bl dt\,K^\Bl_z(t)\ K^D_{F(t)}(w)=\o{K^D_w(F(z))}=K^D_{F(z)}(w)=(Uh)(w).$$
\end{remark}

\begin{remark} The steps 4-5 above can also be replaced by the following elementary argument, namely if $F:\Bl\to Z$ is a holomorphic map such that $F(0)=0$ and $K^D(F(z),F(z))=K^\Bl(z,z)$ in a neighborhood of $0\in\Bl$ then $F$ maps $\Bl$ isometrically into $D.$
Indeed, the equality $\lD(f(z),f(z))^{-p}=K^D(f(z),f(z))=K^\Bl(z,z)=(1-\<z|w\>)^{-p}$ is valid in a neighborhood of $z=0\in\Bl$, thus 
$\lD(f(z),f(z))=1-\<z|z\>$ since $p_\Bl=p_D=p.$ Now both sides are real analytic functions in $z$ and thus the equality holds for all 
$z\in\Bl$. Suppose for some $z_0\in\Bl$, $f(z_0)\notin D$. Then $\Vert f(z_0)\Vert_{Z}\ge 1$, with $\Vert\cdot\Vert_{Z}$ being the Jordan norm in $Z$. Consider the line segment $tz_0, t\in [0,1]$ in $\Bl$ from $0$ to $z_0$. Then $f(tz_0)$ is a curve from $0$ to 
$f(z_0)$. Thus for some $0<t_0\le 1$,
$$\Vert f(t_0z_0)\Vert=1,$$
namely $w_0=f(t_0z_0)$ is a boundary point of $D$. Any boundary point has its Peirce decomposition of the form
$$f(t_0z_0)=w_0=s_1c_1+\cdots+s_rc_r$$
with $\{c_1, \cdots, c_r\}$ being a frame of minimal tripotents and $s_1=1$ and $0\le s_j\le 1,\, 2\le j\le r$. Thus $\lD(w_0,w_0)=(1-s_1^2)\cdots (1-s_r^2)=0$, which implies in turn $1-\langle t_0z_0|t_0z_0\rangle=\lD(f(t_0z_0),f(t_0z_0))=\lD(w_0,w_0)=0$. But $tz_0$ is a point in $\Bl$ and $1-\<t_0z_0|t_0z_0\> >0$, a contradiction. The isometry of $f$ now follows from the definition of the Bergman metric.
\end{remark}

As another result in \cite{Mok-JEMS} Mok proves that for a holomorphic isometry $F:\Bl\to D$ there exists a complex-algebraic subvariety $\VL\ic\Cl^d\xx Z$ of dimension $d$ containing the graph $\GL_F=\{(z,F(z)):z\in\Bl\}.$ The Jordan theoretic approach makes this quite explicit and also shows equality $\VL\ui(\Bl\xx D)=\GL_F$. 

\begin{theorem} Let $F$ be a holomorphic Bergman isometry from a unit ball $\Bl\ic\Cl^d$ into a bounded symmetric domain $D\ic Z$. Then 
there exists a complex-algebraic subvariety $\VL\ic\Cl^d\xx Z$ such that
$$\GL_F:=\{(z,F(z)):z\in\Bl\}=\VL\ui(\Bl\xx D).$$
\end{theorem}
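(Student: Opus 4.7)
The plan is to extract, from the polarized form of the isometric identity, an infinite family of polynomial equations in $(z,u)\in\Cl^d\xx Z$ vanishing on the graph $\GL_F$, and then to use a complex-analytic dimension argument to identify the resulting complex-algebraic subvariety with $\GL_F$ on $\Bl\xx D$.

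First I would polarize the isometric identity \er{1.3} by real-analytic continuation, yielding
$$\lD(F(z),F(w))=1-\<z|w\>\qquad(z,w\in\Bl).$$
Since $\lD$ is a sesqui-holomorphic polynomial on $Z\xx Z$, there is a polynomial $\tilde\lD$, holomorphic in both arguments, with $\lD(z,w)=\tilde\lD(z,\bar w)$, so this becomes $\tilde\lD(F(z),\overline{F(w)})=1-\S_{i=1}^d z_i\bar w_i$. For each fixed $w\in\Bl$, treating $\overline{F(w)}\in Z$ and $\bar w\in\Cl^d$ as constants, the polynomial
$$P_w(z,u):=\tilde\lD(u,\overline{F(w)})+\S_i z_i\bar w_i-1\ \in\ \Cl[z_1,\ldots,z_d,u_1,\ldots,u_m]$$
vanishes identically on $\GL_F$. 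Hilbert's basis theorem ensures that the ideal $I=(P_w:w\in\Bl)$ is finitely generated, so $V(I)\ic\Cl^d\xx Z$ is a complex-algebraic subvariety. I would let $\VL$ be the irreducible component of $V(I)$ passing through $(0,F(0))=(0,0)\in\GL_F$. Since $\GL_F$ is irreducible and connected and contains $(0,0)$, we have $\GL_F\ic\VL$, whence $\GL_F\ic\VL\ui(\Bl\xx D)$.

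For the reverse inclusion, the key is the dimension estimate $\dim_\Cl\VL=d$. Once that holds, $\VL\ui(\Bl\xx D)$ is complex-analytic of pure dimension $d$ containing the closed analytic subset $\GL_F$ of the same dimension; since $\VL$ is algebraically irreducible its regular locus is a connected complex manifold, so the analytic component of $\VL\ui(\Bl\xx D)$ through $(0,0)$ exhausts $\VL\ui(\Bl\xx D)$ and in turn coincides with $\GL_F$ by the standard principle that a closed analytic subset of the same dimension as an irreducible complex-analytic set must equal it.

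The principal obstacle is therefore the dimension bound $\dim_\Cl\VL=d$, which amounts to the algebraicity of $F$. I would attack it by Taylor-expanding each generator $P_w(z,u)$ in $\bar w$ about $w=0$: the coefficient of $\bar w_i$ yields a relation linear in $z_i$ encoding the orthogonal-projection identity of Lemma \ref{3.f}, while the coefficients of $\bar w^\alpha$ for $|\alpha|\ge 2$ yield polynomial constraints in $u$ alone. In combination with the Faraut--Koranyi expansion \er{1.7} of $\tilde\lD$ and the Jordan-theoretic rigidity underlying Section 3 (notably the analyses culminating in Lemmas \ref{3.d}, \ref{3.e} and \ref{3.j}), these constraints cut the $u$-component of $\VL$ down to the algebraic closure of $F(\Bl)$ in $Z$, which has complex dimension $d$, as required.
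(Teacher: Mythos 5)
Your overall strategy (cut out $\GL_F$ by the polynomial family $P_w$ and then argue by dimension) is different from the paper's, and it contains two genuine gaps. The paper instead uses the Faraut--Koranyi expansion \er{1.7} to write $\lD(w_1,w_2)=1+\<E^+_{w_1}|E^+_{w_2}\>-\<E^-_{w_1}|E^-_{w_2}\>$ with $E^\pm_w$ polynomial in $w$, builds a unitary $U$ on $W\op\Cl^d$ with $UE^-_{F(z)}=E^+_{F(z)}\op z$, defines $\VL$ by the algebraic system $UE^-_w=E^+_w\op z$, and proves the reverse inclusion \emph{pointwise}: for $(z_0,w_0)\in\VL\ui(\Bl\xx D)$ the isometry property of $U$ forces $\lD(w_0,F(z))=1-\<z_0|z\>$ for all $z$, whence $\|K^D_{w_0}-K^D_{F(z_0)}\|^2=0$ in the Bergman space and $w_0=F(z_0)$. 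No dimension count or irreducibility is ever needed.

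The first gap in your argument is the dimension bound $\dim_\Cl\VL=d$, which is the entire content of the hard direction and is only "attacked", not proved. The equations $P_w=0$ constrain the $u$-variable solely through the requirement that the polynomial $\lD(u,\cdot)$ agree with $\lD(F(z_0),\cdot)$ on the Zariski closure of $F(\Bl)$; since $F(\Bl)$ is a $d$-dimensional subset of the typically much larger $Z$, this is a finite set of linear conditions on the coefficients of $\lD(u,\cdot)$ and there is no a priori reason the fiber over $z_0$ is zero-dimensional. The appeal to Lemmas \ref{3.d}, \ref{3.e} and \ref{3.j} does not help: those lemmas characterize Mok embeddings via the second fundamental form and say nothing about algebraicity or fiber dimensions for an arbitrary isometry (which, by Section 4, may be irrational). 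The second gap is the final identification: even granting $\dim_\Cl\VL=d$, the intersection of an irreducible affine variety with the open set $\Bl\xx D$ need not be connected or irreducible (an irreducible curve can meet a polydisc in several pieces), so $\VL\ui(\Bl\xx D)$ may contain $d$-dimensional components disjoint from $\GL_F$; connectedness of the regular locus of the full affine variety does not transfer to its intersection with an open subset. To repair the proof you would need an argument, like the paper's reproducing-kernel computation, that identifies every point of $\VL\ui(\Bl\xx D)$ with a graph point directly.
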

\begin{proof} For any $w\in Z,$ put $E_w^\k(z):=E^\k(z,w).$ Then, for the Fischer-Fock product conjugate linear in the first variable, we have $\<E_z^\k|E_w^\k\>=E^\k(z,w).$ Define polynomials
$$E_w^+:=\S_{0\ne k\ \tiny{even}}C_k^{1/2}\ E^\k_w,\qquad E_w^-:=\S_{k\ \tiny{odd}}C_k^{1/2}\ E^\k_w$$
in $W:=\S_{k=0}^{r}\PL_\k(Z)$. Then \er{1.7} implies for the Fock inner product
$$\lD(w_1,w_2)=1+\<E_{w_1}^+|E_{w_2}^+\>-\<E_{w_1}^-|E_{w_2}^-\>$$
for all $w_1,w_2\in Z.$ Therefore the isometry property $\lD(F(z_1),F(z_2))=1-\<z_1|z_2\>$ is equivalent to the identity
\be{5.1}\<E_{F(z_1)}^-|E_{F(z_2)}^-\>=\<E_{F(z_1)}^+|E_{F(z_2)}^+\>+\<z_1|z_2\>\ee
for all $z_1,z_2\in B.$ This implies that 
$$U E_{F(z)}^-:=E_{F(z)}^+\op z$$
defines an isometry from the linear span $\{E_{F(z)}^-:\ z\in\Bl\}\ic W$ into $W\op\Cl^d$ (orthogonal sum). Let $U:W\op\Cl^d\to W\op\Cl^d$ 
be any unitary extension. Then
\be{5.2}\VL:=\{(z,w)\in\Cl^d\xx Z: U E_w^-=E_w^+\op z\}\ee
is an algebraic variety which contains the graph $\GL_F$ by construction. Conversely, let $(z_0,w_0)\in\VL\ui(\Bl\xx D).$ Observe
that for any $(z,w)\in\VL\ui(\Bl\xx D)$
\begin{equation*}\begin{split}\<E_{w_0}^-|E_{w}^-\>&=\<UE_{w_0}^-|UE_{w}^-\>=\<E_{w_0}^+\op z_0|E_{w}^+\op z\>\\
&=\<E_{w_0}^+|E_w^+\>+\<z_0|z\>\end{split}\end{equation*}
since $U$ is an isometry. In particular, 
$$\lD(w_0,w_0)=1+\<E_{w_0}^+|E_{w_0}^+\>-\<E_{w_0}^-|E_{w_0}^-\>=1-\<z_0|z_0\>$$
and 
$$\<E_{w_0}^-|E_{F(z)}^-\>=\<E_{w_0}^+|E_{F(z)}^+\>+\<z_0|z\>$$
for every $z\in\Bl.$ Therefore
$$\lD(w_0,F(z))=1-\<E_{w_0}^+|E_{F(z)}^+\>+\<E_{w_0}^-|E_{F(z)}^-\>=1-\<z_0|z\>$$
and hence $K^D(w_0,F(z))=K^B(z_0,z).$ Fixing $z=z_0$, we have in the Bergman space
$$\langle
K^D_{F(z_0)}|K^D_{w_0}-K^D_{F(z_0)}
\rangle=K^D(w_0,F(z_0))-K^D(F(z_0),F(z_0))=K^\Bl(z_0,z_0)-K^\Bl(z_0,z_0)=0.$$
Therefore, $K^D_{F(z_0)}\perp K^D_{w_0}-K^D_{F(z_0)}$ and
\begin{equation*}\begin{split}\|K^D_{w_0}-K^D_{F(z_0)}\|^2&=\|K^D_{w_0}\|^2-\|K^D_{F(z_0)}\|^2=\lD(w_0, w_0)^{-p}-\lD(F(z_0), F(z_0))^{-p}\\&=(1-\<z_0|z_0\>)^{-p}-(1-\<z_0|z_0\>)^{-p}=0.\end{split}\end{equation*}
This means $w_0=F(z_0)$ and $(z_0,w_0)\in\GL_F$, completing the proof.
\end{proof}

\begin{remark}
Consider the more general case of a 'scaled' Bergman isometry $F:\Bl\to D,$ satisfying the condition $F(0)=0$ and
\be{5.3}1-\<z|w\>=\lD(F(z),F(w))^\la\ee
for some parameter $\la>0.$ It has been shown by Mok that $\la$ is a rational number $\la=\f lk$, i.e. $(1-\<z|w\>)^k
=\lD(F(z),F(w))^l$, and hence the same proof as above works by expanding $\lD(F(z),F(w))^l$ according
to \er{1.6}, $(1-\<z|w\>)^k=\S_{j=0}^k (-1)^j\binom k j\<z|w\>^j$, and obtaining a similar equality as \er{5.1}.
\end{remark}

\end{document}